\newtheorem{theorem}{Theorem}[section]
\newtheorem{conjecture}[theorem]{Conjecture}
\newtheorem{corollary}[theorem]{Corollary}
\newtheorem{lemma}[theorem]{Lemma}
\newtheorem{proposition}[theorem]{Proposition}
\theoremstyle{definition}
\newtheorem{definition}[theorem]{Definition}
\newtheorem{example}[theorem]{Example}
\newtheorem{remark}[theorem]{Remark}
\numberwithin{equation}{subsection}
\DeclareMathAlphabet{\mathpgoth}{OT1}{pgoth}{m}{n}
\DeclareMathAlphabet{\mathpzc}{OT1}{pzc}{m}{it}
\newcommand{\be}{\begin{enumerate}}
\newcommand{\ee}{\end{enumerate}}
\newcommand{\op}{\operatorname}
\newcommand{\tp}{\mathsf}
\newcommand{\X}{\mathbb X}
\newcommand{\R}{\mathbb{R}}
\DeclareMathOperator{\ind}{Ind}
\DeclareMathOperator{\crit}{Crit}
\newcommand{\hess}[1]{\text{Hess}_{#1}}
\DeclareMathOperator{\stab}{stab}
\newcommand{\D}[1]{\mathcal{D}_{#1}}
\newcommand{\grad}{\op{grad}}
\newcommand{\A}[1]{\mathcal{A}_{#1}}
\newcommand{\M}[2]{\mathcal{M}(#1,#2)}
\newcommand{\dd}{\mathrm{d}}
\newcommand{\p}{\partial}
\newcommand{\Z}{\mathbb{Z}}
\newcommand{\Q}{\mathbb{Q}}
\newcommand{\coC}{C^{\op{co}}}
\newcommand{\coH}{H^{\op{co}}}
\newcommand{\cop}{\p^{\op{co}}}
\newcommand{\inC}{C^{\op{in}}}
\newcommand{\inp}{\p^{\op{in}}}
\newcommand{\inH}{H^{\op{in}}}
\newcommand{\labitem}[2]{%
  \def\@itemlabel{#1}
  \item
  \def\@currentlabel{#1}\label{#2}}
\begin{document}

\title[Invariant and coinvariant Morse homologies for orbifolds]{Invariant and coinvariant Morse homologies for orbifolds}

\author{Erkao Bao}
\address{School of Mathematics, University of Minnesota, Minneapolis, MN 55455}
\email{bao@umn.edu}
\urladdr{https://erkaobao.github.io/math/}


\author{Lina Liu}
\address{School of Mathematics, University of Minnesota, Minneapolis, MN 55455}
\email{liu02226@umn.edu}
\urladdr{https://sites.google.com/view/linaliu/}


\begin{abstract}
  In this note, we construct invariant and coinvariant Morse chain complexes with integer coefficients for any compact effective orbifold. We show that the homologies of these two chain complexes are invariants of the orbifold. We conjecture that the homology of the coinvariant chain complex computes the singular homology of the underlying topological space with $\Z$-coefficients, thereby refining the construction in \cite{cho2014orbifold}, which recovers the homology over $\Q$. In contrast, the homology of the invariant Morse chain complex is sensitive to the orbifold structure.
\end{abstract}

\maketitle

\setcounter{tocdepth}{2}
\section{Introduction}

Morse homology on orbifolds is an interesting subject in its own right and also serves as a prototype for various Floer-type theories, such as cylindrical contact homology. However, a complete and satisfactory formulation of Morse homology in the orbifold setting remains underdeveloped, mainly due to the following major challenges:
\begin{enumerate}[label=(\alph*)]
  \item \label{challenge a} \emph{Equivariant transversality}: given a Morse function $f$ on an orbifold, there does not, in general, exist a metric $g$ such that the pair $(f,g)$ is Morse--Smale;
  \item \label{challenge b} Even when the Morse--Smale condition holds, the moduli spaces of Morse flow lines are typically orbifolds rather than manifolds;
  \item \label{challenge c} Some group actions reverse the orientations of descending manifolds.
\end{enumerate}

To address \ref{challenge a}, one can employ the multisection technique by introducing multi-valued metrics or multi-vector fields.  
This idea parallels the use of multisections in the construction of Kuranishi structures around the moduli spaces of pseudoholomorphic curves, as in \cite{fukaya1999arnold}.  
However, this approach typically leads to a theory defined only over~$\Q$.

The issue in \ref{challenge b} can be resolved by working with weighted branched manifolds, which again usually forces the theory to be defined over~$\Q$.

Challenge \ref{challenge c} arises when the orbifold is not a global quotient.  
In \cite{cho2014orbifold}, the authors propose discarding the non-orientable critical points. While this circumvents the orientation problem, it fails to recover the homology of the underlying topological space with integer coefficients. The following example illustrates this limitation.

\begin{example} \label{example: RPn}
  Let $S^{n} \subset \R^{n+1}$, and let $G = \langle \sigma \rangle$ act on $S^n$ via 
  \[
    \sigma(x_0, x_1, \dots, x_n) = (x_0, -x_1, \dots, -x_n).
  \]
  Consider the global quotient orbifold $[S^n/G]$. The underlying topological space is homeomorphic to $S\mathbb{RP}^{n-1}$, the suspension of $\mathbb{RP}^{n-1}$, whose homology satisfies $H_k(S\mathbb{RP}^{n-1}) \simeq \Z_2$ for all odd $1 \le k < n$.  
  Let $f: S^n \to \R$ be the Morse function $f(x_0, x_1, \dots, x_n) = x_0$. The induced Morse function on the quotient orbifold $[S^n/G]$ has only two critical points, $(1,0,\dots,0)$ and $(-1,0,\dots,0)$. For $n$ large, these two critical points cannot generate the full homology.
\end{example}

There are several related works addressing various aspects related challenges, such as \cite{seidel2010localization,hendricks2016flexible,bao2021equivariant,abouzaid2021arnold,bai2025newtransversalityconditionorbifolds}, each operating under different assumptions and frameworks.

In this note, we adopt the \emph{stabilization approach} developed in \cite{bao2024morse}. Roughly speaking, a Morse critical point $p$ is said to be \emph{stable} if its descending manifold is contained in the fixed locus of the stabilizer of $p$. In particular, the critical point $(1,0,\dots,0)$ in Example~\ref{example: RPn} is not stable.
Given an arbitrary Morse function on an orbifold, we locally modify it near non-stable critical points to introduce a few additional stable ones, thereby stabilizing the original critical points. This modification can be carried out explicitly, and by letting the size of the perturbation tend to zero, one can compare the moduli spaces of Morse flow lines for the modified and original Morse functions (see Section 2.4 in \cite{kronheimer2007Monopoles} and \cite{bao2024equivariantmorsehomologyreflection} for special cases).

When all critical points are stable, we show that challenges \ref{challenge a}, \ref{challenge b}, and \ref{challenge c} all disappear.

We then construct two Morse chain complexes for orbifolds: the \emph{invariant complex} and the \emph{coinvariant complex}. The construction is motivated by the global quotient case. Suppose our orbifold is of the form $[M/G]$. The stabilization process yields a Morse chain complex $C$ equipped with a $G$-action. From this data, one obtains:
\begin{itemize}
  \item the \emph{invariant complex} $\inC$, the subcomplex of $C$ fixed under the $G$-action;
  \item the \emph{coinvariant complex} $\coC$, obtained by quotienting $C$ by the subcomplex generated by elements of the form $c - gc$ for all $c \in C$ and $g \in G$.
\end{itemize}
The homology of the coinvariant complex computes the homology of the topological space $M/G$, while the homology of the invariant complex reflects aspects of the orbifold structure.

For general orbifolds that are not global quotients, the analogue of $C$ is not immediately apparent. Nevertheless, both the invariant and coinvariant chain complexes can still be defined.  
In this paper, we provide precise definitions of these complexes, establish their independence from the choices of Morse function and Riemannian metric, and compute several illustrative examples. 
We conjecture that the homology of the coinvariant complex computes the singular homology of the underlying topological space with integer coefficients. In contrast, the homology of the invariant complex is sensitive to the orbifold structure and is expected to be useful to detect whether an orbifold is a global quotient by a finite group action (see Example~\ref{example: sphere}).

\textbf{Acknowledgments} The authors thank Tyler Lawson for insightful discussions. The first author thank Cheol-Hyun Cho, Hansol Hong and Guangbo Xu for helpful conversations. During part of the project, the second author was hosted by Michael Hutchings and Nancy Mae Eagles at UC Berkeley. 
The first author was supported by NSF Grant DMS-2404529,
and the second author was supported by NSF GRFP Grant 2237827.

\section{Morse functions on orbifolds}\label{Section: morse fun on orbifolds}

Let $\X$ be an effective orbifold, and let $\tp X$ denote its underlying topological space.  
We refer to \cite{adem2007orbifolds} for the definition of an effective orbifold using orbifold charts.  
An orbifold chart is denoted by $(V, \Gamma, \phi)$, where:
\begin{enumerate}
  \item $V$ is a connected open subset of $\R^n$;
  \item $\Gamma$ is a finite group acting smoothly and effectively on $V$; and
  \item $\phi: V \to \tp X$ is a $\Gamma$-invariant continuous map such that the induced map $V/\Gamma \to \tp X$ is a homeomorphism onto an open subset $U \subset \tp X$. 
\end{enumerate}

An embedding of orbifold charts is denoted by $$(\psi_{V'V}, \theta_{\Gamma'\Gamma}): (V, \Gamma, \phi) \to (V', \Gamma', \phi'),$$ where:
\begin{enumerate}
  \item $\psi_{V'V}: V \to V'$ is an embedding satisfying $\phi' \circ \psi_{V'V} = \phi$;
  \item $\theta_{\Gamma'\Gamma}: \Gamma \to \Gamma'$ is an injective group homomorphism; and
  \item $\psi_{V'V}$ is $\theta_{\Gamma'\Gamma}$-equivariant.
\end{enumerate}

Let $f: \tp X \to \R$ be a continuous function, and let $(V, \Gamma, \phi)$ be an orbifold chart.  
We write $f_V := f \circ \phi: V \to \R$. 

\begin{definition}
  A function $f: \tp X \to \R$ is called \emph{Morse} if, for every orbifold chart $(V, \Gamma, \phi)$, the local representative $f_V: V \to \R$ is a Morse function.
\end{definition}

By abuse of notation, we will write $f: \X \to \R$ to emphasize that $f$ is defined on the orbifold $\X$.

\begin{theorem}\label{thm: Morse functions are generic}
  Suppose $\tp X$ is compact.  
  For any integer $k \ge 2$, a generic $C^k$-function on $\X$ is Morse.
\end{theorem}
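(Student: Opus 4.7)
I will follow the standard cover/local-genericity/Baire strategy, with the main work being an equivariant local density statement on each chart.

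\textbf{Step 1 (Reduction to finitely many charts).} Since $\tp X$ is compact, cover it by the images $\phi_i(V_i')$ of finitely many orbifold charts $(V_i, \Gamma_i, \phi_i)$, where each $V_i' \subset V_i$ is a $\Gamma_i$-invariant open set with $\overline{V_i'}$ compact in $V_i$. Let $\mathcal U_i \subset C^k(\X)$ be the set of $f$ for which $f_{V_i}$ is Morse on $\overline{V_i'}$. Since being Morse on a compact set is a $C^2$-open condition, each $\mathcal U_i$ is $C^k$-open, so openness of $\bigcap_i \mathcal U_i$ is immediate. Density then reduces to showing each $\mathcal U_i$ is dense, i.e.\ that any $f$ can be perturbed in $C^k(\X)$ by an arbitrarily small amount so that $f_{V_i}$ becomes Morse on $\overline{V_i'}$.

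\textbf{Step 2 (Local equivariant density).} Fix a chart $(V, \Gamma, \phi)$ and a compact $\Gamma$-invariant $K \subset V$. Given $f \in C^k(\X)$, I perturb $f_V$ within the class of $\Gamma$-invariant $C^k$-functions supported in a $\Gamma$-invariant neighborhood $W \Subset V$ of $K$; such a perturbation extends by zero to a perturbation of $f$ on all of $\X$. The strategy is parametric transversality: choose a finite-dimensional subspace $H \subset C^k_{\Gamma}(V)$ of $\Gamma$-invariant functions supported in $W$, large enough that for every $x \in K$ with stabilizer $\Gamma_x$, the $2$-jet evaluation $H \to J^2_x(V)^{\Gamma_x}$ is surjective onto the $\Gamma_x$-invariant $2$-jets at $x$. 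Such an $H$ is built in $\Gamma_x$-linearized charts around each orbit in $K$ (using the fact that a $\Gamma_x$-action on a tangent space is linearizable), taking products of $\Gamma_x$-invariant polynomials with $\Gamma$-invariant bump functions, averaging over $\Gamma$, and summing over a $\Gamma$-invariant finite partition of unity subordinate to a cover of $K$ by such orbit neighborhoods. Applying parametric transversality to
\[
F : V \times H \longrightarrow T^*V, \qquad (x,h) \longmapsto d(f_V+h)(x),
\]
the $2$-jet ampleness of $H$ makes $F$ transverse to the zero section along $K \times H$; hence for $h$ in a residual subset of $H$, $d(f_V+h)$ is transverse to the zero section on a neighborhood of $K$, which is precisely the Morse condition for $f_V + h$ on $K$. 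Taking $h$ small in $C^k$ produces the required perturbation.

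\textbf{Step 3 (Baire).} The finite intersection $\bigcap_i \mathcal U_i$ is open and dense in $C^k(\X)$, proving the theorem.

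\textbf{Main obstacle.} The subtle point is hidden in Step 2: at a critical point $x$ with nontrivial stabilizer, the Hessian $\mathrm{Hess}_x(f_V + h)$ is forced to be a $\Gamma_x$-invariant symmetric bilinear form on $T_xV$, and I must check that the equivariance constraint does not obstruct non-degeneracy. The underlying linear-algebra input is the statement that non-degenerate $\Gamma_x$-invariant symmetric bilinear forms are open and dense inside the space of all $\Gamma_x$-invariant symmetric bilinear forms on $T_xV$: using the real $\Gamma_x$-isotypic decomposition $T_xV = \bigoplus_\alpha W_\alpha \otimes V_\alpha$, Schur's lemma reduces such a form to blocks on the multiplicity spaces $W_\alpha$, each block being a genuinely unconstrained bilinear form (over the appropriate endomorphism division algebra) on which non-degeneracy is open-dense. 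This is exactly the property captured by the surjectivity of $H \to J^2_x(V)^{\Gamma_x}$, and makes the parametric transversality argument succeed despite the equivariance constraint.
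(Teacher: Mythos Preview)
Your global structure matches the paper's outline: finite cover of $\tp X$ by charts, local density of the Morse condition on each chart, then combine. The paper runs this inductively (perturb chart-by-chart, using openness on compacta to preserve earlier work) and simply cites the local density statement as Proposition~\ref{prop: morse is generic locally} from \cite{bao2024morse}, whereas you use a Baire argument and attempt to prove the local statement directly.

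Your Step~2 has a genuine gap: the claim that $F:V\times H\to T^*V$ is transverse to the zero section along $K\times H$ is false at points with nontrivial stabilizer. At a critical point $x$ with $\Gamma_x\neq 1$, the vertical part of $DF_{(x,h)}$ is $(v,h')\mapsto \mathrm{Hess}_x(f_V+h)\,v + dh'(x)$. Since each $h'\in H$ is $\Gamma$-invariant, $dh'(x)\in (T_x^*V)^{\Gamma_x}$, and the Hessian, being $\Gamma_x$-equivariant, has image a $\Gamma_x$-subrepresentation. If the Hessian is degenerate on the non-trivial isotypic summand of $T_xV$ --- e.g.\ $V=\R^2$, $\Gamma=\Z/2$ acting by $(x,y)\mapsto(x,-y)$, $f_V=x^2$, $h=0$, at the origin --- then the image of $DF$ misses the non-trivial part of $T_x^*V$ entirely and transversality fails. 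Two-jet ampleness onto $J^2_x(V)^{\Gamma_x}$ does not repair this: it controls the \emph{second}-order part of $h'$, which is invisible to the vertical derivative of $F$. The Schur/isotypic density fact in your ``Main obstacle'' paragraph is correct and is the key input, but it does not feed into a single Sard--Smale step for $F$ on all of $V$. The standard fix is to stratify $K$ by orbit type: critical points on the $(H)$-stratum lie in $V^H$, the Hessian block-diagonalizes along $T_xV^H\oplus (T_xV^H)^\perp$, ordinary parametric transversality on the manifold $V^H$ handles the tangential block, and your density fact is then applied separately (as a finite-codimension condition along the now-isolated critical locus) for the normal block.
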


For orbifolds that are global quotients, this statement follows directly from the following classical result.

\begin{theorem}[\cite{wasserman1969equivariant}, Lemma 4.8; see also \cite{bao2024morse}, Theorem~1.2]
  Let $M$ be a closed manifold, and let $G$ be a finite group acting smoothly on $M$.  
  Then the set of $G$-equivariant $C^k$-Morse functions is generic in the space of $G$-equivariant $C^k$-functions.
\end{theorem}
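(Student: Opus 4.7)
The plan is a parametric-transversality argument organized by the orbit-type stratification of $M$, combined with the Baire category theorem. Since $G$ is finite, for each subgroup $H \le G$ the fixed locus $M^H$ is a closed smooth submanifold, and $M$ stratifies into the finitely many pieces $M_{(H)} = \{p \in M : G_p \text{ is conjugate to } H\}$. The key reformulation I would use is: a $G$-equivariant $C^k$ function $f$ is Morse iff at every critical point $p$, setting $H = G_p$, both (i) $f|_{M^H}$ is Morse at $p$ in the ordinary sense, and (ii) the Hessian $\mathrm{Hess}(f)_p$ restricted to the $H$-invariant orthogonal complement $N_p$ of $T_pM^H$ in $T_pM$ is nondegenerate. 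This follows from $H$-invariance of $df_p$ and $\mathrm{Hess}(f)_p$: the former lies in $(T_pM)^{*H} \cong (T_pM^H)^*$, so vanishes iff $d(f|_{M^H})_p = 0$; the latter is block-diagonal with respect to the decomposition $T_pM = T_pM^H \oplus N_p$.

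I would then induct downward on the poset of conjugacy classes $(H)$ of subgroups (deepest strata first). For each class $(H)$, let $\mathcal U_{(H)} \subset C^k(M)^G$ be the set of $f$ satisfying (i)--(ii) at all critical points on $M_{(H)}$. Openness of $\mathcal U_{(H)}$ follows from the compactness of $M$ and the fact that nondegeneracy is a $C^2$-open condition. Density is the content of Sard--Smale applied to the universal zero set of
\[
  \Phi_H : C^k(M)^G \times M^H \longrightarrow T^*M^H, \qquad (f, p) \longmapsto d(f|_{M^H})_p,
\]
augmented by a 2-jet evaluation recording $\mathrm{Hess}(f)_p|_{N_p}$ in the bundle of $H$-invariant symmetric 2-tensors on $N_p$.

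The main technical content is verifying that $\Phi_H$ together with its jet augmentation is a submersion at every zero with $G_p = H$; equivalently, by adding $G$-equivariant functions I must be able to perturb the 1-jet at $p$ to any element of $(T_pM^H)^*$ and the $H$-invariant part of the 2-jet to any $H$-invariant symmetric form on $T_pM$. Finiteness of $G$ supplies the perturbations: choose an $H$-invariant open neighborhood $U$ of $p$ small enough that $gU \cap U = \emptyset$ for $g \notin H$; then any $H$-invariant compactly supported function on $U$ extends by $G$-translation to a $G$-equivariant function on $G\cdot U$, extended by zero elsewhere. Local models $\chi\cdot L$ and $\chi\cdot Q$, for $H$-invariant linear $L$ and $H$-invariant quadratic $Q$ on $T_pM$ with $\chi$ a bump supported in $U$, exhaust the relevant target spaces, so Sard--Smale produces a residual set of $f$ for which the fiber $\Phi_H^{-1}(0)$ is cut out transversely.

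The step I expect to be the main obstacle is the stratum-by-stratum bookkeeping: perturbations used to achieve (i)--(ii) on the current stratum $(H)$ must not destroy the conditions already established on the finitely many deeper strata $(H')$ with $H \lneq H'$. I plan to handle this by confining each perturbation's support to a small neighborhood of a critical point on $M_{(H)}$, chosen disjoint from the finitely many already-located critical points on deeper strata — possible because the Morse condition is $C^2$-open and those critical points are isolated. Finite intersection of the open-dense sets $\mathcal U_{(H)}$ over the finitely many conjugacy classes of subgroups of $G$ then yields an open dense — hence residual — set of equivariant Morse functions, completing the proof.
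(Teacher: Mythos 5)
You should first note that the paper does not actually prove this statement: it is quoted from \cite{wasserman1969equivariant} (see also Theorem~1.2 of \cite{bao2024morse}), so the comparison is with the classical argument rather than with an in-paper proof. Your route is essentially Wasserman's: the reduction of the equivariant Morse condition at $p$ with $H=G_p$ to (i) $f|_{M^H}$ Morse at $p$ plus (ii) nondegeneracy of the normal Hessian is correct (your block-diagonality argument is the standard Schur-type one, using that $N_p$ contains no trivial $H$-summand and $T_pM^H=(T_pM)^H$), and the perturbations supported on tubes $G\cdot U$ with $gU\cap U=\emptyset$ for $g\notin H$ are exactly how equivariance is reconciled with transversality. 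By contrast, when the paper needs genericity for orbifolds (Theorem~\ref{thm: Morse functions are generic}) it patches a \emph{local} open-and-dense statement (Proposition~\ref{prop: morse is generic locally}, genericity on compact invariant subsets of a $G$-inner-product space) over a finite chart cover, avoiding global universal moduli altogether; your single global Sard--Smale framework is heavier but self-contained, at the price of the stratum bookkeeping you describe.

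Two points in your write-up need repair, though both are fixable by standard means. First, the sets $\mathcal U_{(H)}$ are \emph{not} open as you defined them: since no condition is imposed at critical points on $\overline{M_{(H)}}\setminus M_{(H)}$, degenerate critical points can be created on $M_{(H)}$ arbitrarily close to a degenerate critical point lying on a deeper stratum. Concretely, for $\Z_2$ acting on a chart by $x\mapsto -x$, the even functions $f_a(x)=(x^2-a)^3$ converge in $C^k$ to $x^6$ as $a\to 0^+$; each $f_a$ has degenerate free critical points at $\pm\sqrt{a}$, while $x^6$ belongs (vacuously) to $\mathcal U_{(e)}$. So your final step, ``finite intersection of open dense sets,'' does not parse stratum by stratum; the fix is to run the downward induction with the cumulative sets (Morse at all critical points whose stabilizer is conjugate into the classes already handled) for \emph{density}, and to invoke openness only for the full Morse condition, which is $C^2$-open by compactness of $M$. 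Second, a regularity caveat in the jet augmentation: the Hessian evaluation on the universal critical locus is only of class $C^{k-2}$, so the Sard--Smale/jet-transversality step as written requires $k\ge 3$; for $k=2$ one recovers density by first approximating within smooth equivariant functions ($C^\infty$ equivariant functions are $C^2$-dense, by averaging a smooth approximation over the finite group) and then using $C^2$-openness. With these adjustments your outline is a correct reconstruction of the cited classical proof.
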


To prove Theorem~\ref{thm: Morse functions are generic} for general orbifolds, we require the following local statement.

\begin{proposition}[\cite{bao2024morse}, Theorem~3.2]\label{prop: morse is generic locally}
  Let $G$ be a finite group, and let $V$ be a $G$-inner product space of dimension $n$, with $k \ge 2$.  
  Then, for any $G$-invariant compact subset $K \subset V$, the set of Morse functions is open and dense in the space of $G$-invariant $C^k$-functions on $K$.
\end{proposition}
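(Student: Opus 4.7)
The argument splits into openness (routine) and density (the main content). For openness: if $f$ is Morse on $K$ then $\mathrm{Crit}(f)\cap K$ is compact with uniformly invertible Hessian, and by the implicit function theorem applied to $\nabla g$, any $G$-invariant $g$ with $\|g-f\|_{C^2}$ small has critical points in $K$ close to those of $f$ and still-invertible Hessians there; hence the Morse-on-$K$ condition is $C^k$-open for $k\ge 2$.

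For density, I adapt Wasserman's equivariant Morse argument to this linear setting. Decompose $V$ into its finitely many orbit-type strata $V=\bigsqcup_{(H)} V_{(H)}$, where $(H)$ ranges over conjugacy classes of subgroups of $G$; each stratum is a smooth $G$-invariant submanifold. Order the strata by depth (inclusion of stabilizers) and induct from deepest to shallowest. At each inductive step, the deeper strata have already been made Morse, and I perturb $f$ by a $G$-invariant function supported in a $G$-invariant tubular neighborhood of the next stratum, chosen small enough not to destroy the Morse property on the deeper strata (this is possible by openness).

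At a critical orbit $G\cdot p$ with stabilizer $H$, averaging shows that any $H$-invariant Hessian splits orthogonally along the $H$-invariant decomposition $V=V^H\oplus(V^H)^\perp$, giving a tangential block on $V^H$ and a normal block on $(V^H)^\perp$. The tangential block equals the Hessian of $f|_{V^H}$, which is a $W_H:=N_G(H)/H$-invariant function on $V^H$; the inductive hypothesis (for the smaller group $W_H$ acting effectively on $V^H$) makes it Morse. The normal block lies in the finite-dimensional space $\mathrm{Sym}^2_H((V^H)^\perp)$ of $H$-invariant symmetric bilinear forms; since averaging the ambient inner product produces a positive-definite element, the non-degenerate elements are open and dense, so a generic small $G$-invariant quadratic perturbation can be chosen to make the normal block non-degenerate. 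Extending these local modifications $G$-equivariantly via $G$-invariant bump functions (obtained by averaging over $G$) assembles a global $G$-invariant perturbation realizing the desired Morse property on $K$.

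\textbf{Main obstacle.} The essential subtlety is decoupling the tangential and normal perturbations so that fixing non-degeneracy of the normal block at critical points in one stratum does not create new degenerate critical points on the deeper (previously handled) strata. This is managed via $G$-equivariant tubular neighborhoods of each stratum together with an inductive scheme in which each perturbation is supported strictly away from the deeper strata already treated. A secondary technical point is verifying that the tangential problem is genuinely of lower induction rank: since $W_H$ acts effectively on $V^H$ and $V^H$ has strictly smaller dimension than $V$ whenever $(H)$ is a non-principal orbit type, a suitable induction on $(\dim V^H,|H|)$ closes the argument.
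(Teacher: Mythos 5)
The paper itself does not prove this proposition: it is imported verbatim from \cite{bao2024morse}, Theorem~3.2 (with the global statement attributed to Wasserman), so there is no in-paper proof to compare against. Your argument is the standard Wasserman-style route that such a statement takes: stratify by orbit type, induct from the deepest strata, block-diagonalize the $H$-invariant Hessian along $V = V^H \oplus (V^H)^\perp$ (correct — the cross terms vanish since there are no $H$-maps from the trivial isotypic piece to the nontrivial ones), handle the tangential block by induction and the normal block by a generic invariant quadratic perturbation supported near each now-isolated critical point, and use openness on compact sets (the paper's own appeal to Milnor, Lemma~B) to protect the strata already treated. In outline this is sound, and almost certainly parallel to the cited source.

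Two points in your write-up need repair, though neither is fatal. First, your assertion that $W_H = N_G(H)/H$ acts \emph{effectively} on $V^H$ is false in general: for $G = \langle \mathrm{diag}(-1,-1,1),\ \mathrm{diag}(-1,1,1)\rangle \subset O(3)$ and $H$ generated by the first element, $V^H$ is the $z$-axis, on which the second generator acts trivially. But effectiveness is not needed — the proposition assumes none — so simply replace $W_H$ by its image in $O(V^H)$; the induction is then genuinely on $\dim V^H$, which drops whenever $H$ acts nontrivially on $V$. Second, and more substantively, your induction on $(\dim V^H, |H|)$ does not terminate at the principal stratum: when $H = \{1\}$ you get $V^H = V$ and $W_H = G$, so neither coordinate decreases and the ``tangential problem'' is the original problem — as stated, the induction is circular there. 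This case must be taken as the base case and handled by the classical non-equivariant argument: near a point with trivial stabilizer, the $G$-orbit of a small ball is a disjoint union of $|G|$ balls, so any compactly supported perturbation can be spread equivariantly without constraint, and ordinary Sard--Thom genericity applies. With these two adjustments your argument closes correctly.
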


\begin{proof}[Outline of the proof of Theorem~\ref{thm: Morse functions are generic}]
  Let $f: \X \to \R$ be a $C^k$-function.  
  It suffices to show that $f$ can be perturbed arbitrarily slightly to become Morse.  
  Cover $\tp X$ by finitely many orbifold charts $\{(V_i, \Gamma_i, \phi_i)\}$ such that the union of closed disks $D_i \subset \phi_i(V_i)$ covers $\tp X$.  
  Suppose inductively that $f$ has already been perturbed to be Morse on $D_1 \cup \dots \cup D_j$. 
  By Proposition~\ref{prop: morse is generic locally}, we can make an arbitrarily small perturbation of $f$ on the chart $(V_{j+1}, \Gamma_{j+1}, \phi_{j+1})$ so that $f$ is Morse on $D_{j+1}$.  
  Since being Morse is an open condition on a compact subset (see \cite{milnor1965lectures}, Lemma~B), it follows that $f$ is Morse on $D_1 \cup \dots \cup D_{j+1}$.
\end{proof}

For each $x \in \tp X$, let $(V, \Gamma, \phi)$ be a coordinate chart containing $x$, i.e., $x \in \phi(V)$, and let $\tilde{x} \in V$ be a lift of $x$.  
Define the stabilizer
\[
\stab(\tilde{x}) := \{\, \mu \in \Gamma \mid \mu \tilde{x} = \tilde{x} \,\} \subset \Gamma.
\]

Let $p \in \tp X$ be a critical point of $f$, and $(V, \Gamma, \phi)$ be an orbifold chart containing $p$.  
We let $\tilde{p} \in V$ be a lift of $p$.  

Denote $T_{\tilde{p}} V^\perp$ to be the kernel of the averaging operator on $T_{\tilde{p}} V$ given by $$\xi \mapsto \sum_{\rho \in \stab(\tilde{p})} \dd \rho \, \xi.$$

\begin{definition}[Stable critical point]
  A critical point $p \in \tp X$ of $f$ is said to be \emph{stable} if there exists an orbifold chart $(V, \Gamma, \phi)$ containing $p$ and a lift $\tilde{p}$ of $p$ such that $\hess f(\tilde{p})$ is positive definite on $T_{\tilde{p}} V^\perp$.
\end{definition}

\begin{remark}
  The definition of stability is independent of the choice of orbifold chart and lift $\tilde{p}$.
\end{remark}

\begin{definition}[Stable Morse function]
  A Morse function $f: \X \to \R$ is said to be \emph{stable} if all of its critical points are stable.
\end{definition}

For each critical point $p$ of $f$, denote by $\ind(p)$ its Morse index.  
Given a stable Morse function $f$, we define the Euler number of $\X$ by
\[
\chi(\X) = \sum_{p \in \crit(f)} (-1)^{\ind(p)} \frac{1}{|\stab(\tilde{p})|},
\]
where $\tilde{p}$ is a lift of $p$ in an orbifold chart containing $p$, and $\ind(p)$ is the Morse index of $p$.  
The quantity $|\stab(\tilde{p})|$ is independent of the choice of chart and lift.  
This definition agrees with Definition~13.3.3 in \cite{thurston2022thegeometry}.

Now we study the genericity of stable Morse functions on an orbifold. We start with the global quotient case. Let $M$ be a closed manifold, and $G$ be a finite group that acts on $M$ smoothly. 

\begin{theorem}[\cite{mayer1989Ginvariant}; \cite{bao2024morse}, Theorem~1.7]\label{thm:stableperturbation}
  Given a $G$-equivariant Morse function $f$ on $M$, there exists a $G$-equivariant \emph{stable} Morse function $f'$ that is arbitrarily close to $f$ in the $C^0$-topology and agrees with $f$ except near the critical points of $f$.
\end{theorem}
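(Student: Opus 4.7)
The plan is to reduce the problem to a local one near each non-stable critical point and then construct an explicit equivariant perturbation in that local model. Let $p \in \crit(f)$ be a non-stable critical point and set $H := \stab(p)$. By $G$-equivariance, every point of the orbit $G \cdot p$ is a critical point of $f$ with conjugate stabilizer, and distinct $G$-orbits of critical points can be treated independently; so it suffices to build an $H$-equivariant perturbation supported in a small $H$-invariant slice around $p$ and then spread it $G$-equivariantly to the full orbit.

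Inside such a slice I would apply the equivariant Morse lemma (Bochner linearization for the finite group $H$) to obtain $H$-equivariant coordinates identifying a neighborhood of $p$ with a ball in $T_pM = V^- \oplus V^+$, with respect to an $H$-invariant inner product, in which $f - f(p) = -\tfrac12\|x^-\|^2 + \tfrac12\|x^+\|^2$. Non-stability of $p$ is exactly the statement that $H$ acts non-trivially on $V^-$. I would then replace $f$ in the chart by $f + \psi(\|x^+\|^2)\,h(x^-)$, where $\psi$ is an $H$-invariant cutoff equal to $1$ near the origin and supported inside the chart, and $h \colon V^- \to \R$ is an $H$-invariant compactly supported function chosen so that $-\tfrac12\|x^-\|^2 + h$ is Morse on $V^-$ and has only stable critical points. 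Rescaling $h \mapsto \epsilon\,h(\cdot/\sqrt{\epsilon})$ makes the full perturbation arbitrarily $C^0$-small and confines it to a small neighborhood of $p$, so $f$ is only altered near non-stable critical points, as required. The critical points of the perturbed function inside the chart are exactly $(x^+,x^-) = (0, y_*)$ where $y_*$ is a critical point of the model, so stability transfers from the model.

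The core step is constructing this model function $h$ on the linear $H$-representation $V^-$. I would use the orbit-type stratification of $V^-$: for each conjugacy class $(K)$ of subgroups of $H$, the stratum $V^-_{(K)}$ consists of points with stabilizer conjugate to $K$, and $K$ acts trivially on the tangent space to $V^-_{(K)}$ at every such point. Building $h$ inductively from the deepest stratum $V^{-,H}$ outward, the goal is that every critical point of $-\tfrac12\|x^-\|^2 + h$ lies in the interior of a single stratum and that its Hessian has negative eigenspace contained in the tangent space of that stratum—stability is then automatic. Tangentially this amounts to choosing $h$ to be Morse along the stratum; normally it amounts to making the Hessian $D^2 h$ exceed the identity in directions normal to the stratum, thereby overpowering the background $-I$ coming from the quadratic part. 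The needed $H$-invariant building blocks can be assembled from $H$-invariant polynomials (finitely generated by Hilbert's theorem) localized by $H$-invariant bump functions near chosen $H$-orbits.

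The main obstacle is the simultaneous management of the orbit types of the new critical points, the tangency of their negative eigenspaces to the appropriate stratum, and the $C^0$-smallness of the aggregate perturbation. The first two are controlled by the orbit-type decomposition together with invariant-theoretic input (Luna's slice theorem gives the local normal form of each stratum and a description of its normal bundle as an $H$-representation), and the third by the $\sqrt{\epsilon}$-rescaling. Crucially, the conclusion places no constraint on the Morse indices of the new critical points, so one has the freedom to flip signs of Hessian eigenvalues in non-$H$-fixed directions where necessary; this flexibility is what ultimately allows the inductive construction to close.
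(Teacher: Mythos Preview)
Your reduction to a local model (slice plus equivariant Morse lemma, $G$-equivariant spreading from an $H$-invariant perturbation near one critical point, the $\sqrt{\epsilon}$-rescaling to achieve $C^0$-smallness while keeping the Hessian fixed) is correct and matches the paper's outline. The genuine divergence is in how the model perturbation on $V^-$ is built. The paper (following \cite{bao2024morse}) first splits off the trivial part, $T^-_{\tilde p}V = (T^-_{\tilde p}V)^H \oplus (T^-_{\tilde p}V)^\perp$, and then chooses an $H$-invariant \emph{stable} Morse function $h$ on the unit sphere $S^-_p \subset (T^-_{\tilde p}V)^\perp$; adding a quadratic term in the $(T^-_{\tilde p}V)^\perp$ directions makes $p$ itself stable and creates one new stable critical point for each $x\in\crit(h)$, with $\ind_{f'}(x)=\ind_h(x)+\dim (T^-_{\tilde p}V)^\perp$. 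The existence of $h$ on $S^-_p$ is an instance of the same theorem in strictly lower dimension, so the argument is a clean dimension induction. Your route instead attacks $V^-$ directly via its orbit-type stratification, forcing the negative eigenspace at each new critical point to be tangent to its stratum by making $D^2h$ dominate the identity in the normal directions. This is a valid strategy and avoids passing to the sphere, but the inductive bookkeeping over strata is heavier to make precise, and you lose the explicit description of the new critical set and its indices that the sphere construction provides (which the paper uses later when comparing moduli spaces before and after stabilization). In short: both approaches are sound; the paper's sphere-plus-dimension-induction is more economical and yields sharper control on the output, while yours is more hands-on but leaves more to verify.
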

The modification is local, so we have the following corollary.
\begin{corollary}\label{corollary: stabilization}
  Given a Morse function $f: \X \to \R$, there exists a \emph{stable} Morse function $f'$ that is arbitrarily close to $f$ in the $C^0$-topology and agrees with $f$ except near the critical points of $f$.
\end{corollary}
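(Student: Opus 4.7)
The plan is to reduce the statement to the global-quotient Theorem \ref{thm:stableperturbation} chart-by-chart at each critical point. Since $\tp X$ is compact, $\crit(f) = \{p_1, \dots, p_N\}$ is finite. For each $p_i$ I would pick an orbifold chart $(V_i, \Gamma_i, \phi_i)$, a lift $\tilde p_i \in V_i$, and set $H_i := \stab(\tilde p_i)$. The local representative $f_{V_i} := f \circ \phi_i$ is then $\Gamma_i$-invariant, in particular $H_i$-invariant, with $\tilde p_i$ an isolated critical point.

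The key step is to choose an $H_i$-invariant open neighborhood $W_i$ of $\tilde p_i$ small enough that the slice map $\Gamma_i \times_{H_i} W_i \to V_i$ is a $\Gamma_i$-equivariant embedding onto an open set $U_i$, that $\tilde p_i$ is the only critical point of $f_{V_i}$ inside $W_i$, and that the sets $\phi_i(U_i)$ are pairwise disjoint in $\tp X$. Applying Theorem \ref{thm:stableperturbation} to the $H_i$-equivariant Morse function $f_{V_i}|_{W_i}$---whose proof is already local near the critical point---produces an $H_i$-equivariant stable Morse perturbation $f'_{V_i}$ on $W_i$ that is $C^0$-close to $f_{V_i}|_{W_i}$ and coincides with it outside a compact subset of $W_i$. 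I would then extend $f'_{V_i}$ to a $\Gamma_i$-invariant function on $U_i$ by $f'_{V_i}(\gamma \cdot w) := f'_{V_i}(w)$; well-definedness uses the slice property (if $\gamma w = \gamma' w'$ with $w, w' \in W_i$ then $\gamma^{-1}\gamma' \in H_i$), and since the extension equals $f_{V_i}$ near $\partial U_i$ it glues to $f_{V_i}$ on $V_i \setminus U_i$ and descends to $\phi_i(V_i) \subset \tp X$. Because the modification supports $\phi_i(U_i)$ are pairwise disjoint, the pieces assemble into a single function $f' : \tp X \to \R$ that is $C^0$-close to $f$ and agrees with $f$ outside a small neighborhood of $\crit(f)$.

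Finally, I would verify that $f'$ is a stable Morse function on $\X$. Being Morse is a local condition in charts, so this follows from the fact that $f'_{V_i}$ is Morse on $W_i$ together with the observation that its $\Gamma_i$-invariant extension introduces no new critical points outside the orbit $\Gamma_i \cdot \crit(f'_{V_i}|_{W_i})$. Stability at every such critical point reduces via the slice decomposition to the $H_i$-stability provided by Theorem \ref{thm:stableperturbation}: the stabilizer of $\gamma \cdot w$ is $\gamma \stab_{H_i}(w) \gamma^{-1}$ and the negative Hessian eigenspace at $\gamma \cdot w$ is carried by $\gamma$ from that at $w$, so trivial action on the slice transfers to trivial action on the orbit. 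The main technical point is establishing the clean slice decomposition $U_i \cong \Gamma_i \times_{H_i} W_i$ used in the extension step, which is a standard consequence of the equivariant tubular neighborhood theorem for the smooth action of the finite group $\Gamma_i$ on $V_i$, so I do not expect any genuine difficulty here.
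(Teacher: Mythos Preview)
Your proposal is correct and is essentially a detailed expansion of the paper's one-line argument, which simply observes that the modification in Theorem~\ref{thm:stableperturbation} is local near each critical point and therefore transfers to an arbitrary orbifold chart-by-chart. The slice decomposition and equivariant extension you spell out are exactly the mechanism implicit in that sentence, and the paper's subsequent outline of the modification near a non-stable critical point confirms this is the intended reading.
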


We now outline the modification of $f$ near a non-stable critical point $p$, as carried out in the proof of Theorem~1.7 in \cite{bao2024morse}.  
Let $(V, \Gamma, \phi)$ be a coordinate chart containing $p$, and let $\tilde{p} \in V$ be a lift of $p$.  
Let $H = \stab(\tilde{p}) \subset \Gamma$, and then $H$ acts linearly on $T_{\tilde{p}} V$. 
Pick a $\Gamma$-equivariant metric $\mu$ on $V$. With respect to $\mu$, we raise $\hess f(\tilde{p})$ to a self-adjoint $(1,1)$-tensor, and denote its negative eigenspace by $T_{\tilde{p}}^- V$.  
Then
\[
T_{\tilde{p}}^- V = T_{\tilde{p}}^- V^H \oplus T_{\tilde{p}}^- V^\perp,
\]
where $T_{\tilde{p}}^- V^H$ is the subspace of $T^-_{\tilde{p}}V$ fixed by $H$, and $T_{\tilde{p}}^- V^\perp$ is the kernel of the averaging operator on $T^-_{\tilde{p}}V$ that is given by $\xi \mapsto \sum_{\rho \in H} \dd \rho \, \xi$.  
Let $S_p^- \subset T_{\tilde{p}}^- V^\perp$ be the unit sphere. 

Choose a $H$-equivariant stable Morse function $h: S_p^- \to \R$, whose existence is guaranteed by Theorem~1.7 of \cite{bao2024morse}.  
We modify $f$ in a small neighborhood of $p$ along the directions of $T_{\tilde{p}}^- V^\perp$ by adding a quadratic term so that $\tilde{p}$ becomes stable and new stable critical points are introduced corresponding to $\crit(h)$.  
They satisfy
\[
\ind_{f'}(\tilde{p}) = \ind_f(\tilde{p}) - \dim T_{\tilde{p}}^- V^\perp
\]
\[
\ind_{f'}(x) = \ind_f(\tilde{p}) - \dim T_{\tilde{p}}^- V^\perp + \ind_h(x) + 1
\]
where $x \in \crit(h)$ corresponds to a new stable critical point, also denoted as $x$, of the perturbed function $f'$.

\section{The moduli space of Morse flow lines}

We say $g$ is a Riemannian metric on $\X$ if it contains the following data:
\begin{enumerate}
  \item for each orbifold chart $(V, \Gamma, \phi)$, $g_V$ is a $\Gamma$-equivariant Riemannian metric on $V$;
  \item for any orbifold chart embedding $(\psi_{V'V}, \theta_{\Gamma'\Gamma}): (V, \Gamma, \phi) \to (V', \Gamma', \phi')$, we have
  $\psi_{V'V}^* g_{V'} = g_V$.
\end{enumerate}

We denote by $-\grad_f$ the negative gradient vector field of $f$ with respect to the metric $g$.
As in the manifold case, for each critical points $p, q \in \tp X$ of $f$, we can define descending orbifolds $\D{p} \subset X$ and ascending orbifolds $\A{p} \subset X$ in the usual way.

\begin{definition}
  We say the pair $(f,g)$ is \emph{Morse--Smale} if $\D{p}$ is transverse to $\A{q}$ for all critical points $p,q$ of $f$.
\end{definition}

It is known that for certain Morse functions $f$, there does not exist a metric $g$ such that the pair $(f,g)$ is Morse--Smale. However, if the Morse function is stable, then there is no obstruction.

\begin{theorem}[Orbifold Smale theorem]\label{thm: morse smale is generic fixing a stable f}
  Let $k \geq 1$ be an integer, and $\X$ be a compact effective orbifold. Let $f$ be a stable Morse function on $\X$ of class $C^{k+1}$. Then, for a generic metric $g$ on $\X$ of class $C^k$, the pair $(f, g)$ is Morse-Smale.
\end{theorem}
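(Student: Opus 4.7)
The plan is to adapt the classical parametric Sard--Smale proof of the Smale theorem to the orbifold setting, using stability to reduce the transversality condition in each chart to the standard Morse--Smale condition on a fixed-locus submanifold.

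The crucial structural observation is that stability forces $\D{p} \subset V^{H_p}$, where $H_p = \stab(\tilde p)$ and $V^{H_p}$ denotes the fixed locus in a chart $(V, \Gamma, \phi)$ containing $p$. Indeed, the $\Gamma$-equivariant flow of $-\grad_f$ preserves $V^{H_p}$, and stability places $T_{\tilde p}^- V$ inside $T_{\tilde p} V^{H_p}$. Consequently, any flow line $\gamma$ from $p$ to $q$ lies in $V^{H_p}$, which forces $H_p \subset H_q$. Combined with stability of $q$ and the inclusion $V^{H_q} \subset V^{H_p}$, this gives $T_{\tilde q}^- V \subset T_{\tilde q} V^{H_p}$, so that the $H_p$-anti-invariant complement $N_p$ of $T_{\tilde q} V^{H_p}$ in $T_{\tilde q} V$ is forced to sit inside $T_{\tilde q}^+ V = T_{\tilde q} \A{q}$. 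By $H_p$-equivariance of the flow, this inclusion propagates along $\gamma$: the non-trivial $H_p$-isotypic piece $N_p|_\gamma$ lies inside $T_\gamma \A{q}$ at every point. As a result, the full transversality condition $T_\gamma \D{p} + T_\gamma \A{q} = T_\gamma V$ decomposes into the automatic piece along $N_p|_\gamma$ and the reduced condition
\[
T_\gamma \D{p} + (T_\gamma \A{q} \cap T_\gamma V^{H_p}) = T_\gamma V^{H_p},
\]
which is just the classical Morse--Smale condition for $(f|_{V^{H_p}}, g|_{V^{H_p}})$ on the smooth manifold $V^{H_p}$.

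With this reduction in hand, the genericity follows from a standard parametric Sard--Smale argument. For each pair $(p,q)$ of critical points with $H_p \subset H_q$, I would form the universal moduli space $\mathcal U(p,q) = \{(\gamma, g) : \gamma \text{ is a flow line from } p \text{ to } q \text{ for } (f,g)\}$ parametrized by the Banach manifold of $C^k$ orbifold metrics. Surjectivity of the linearization with respect to the metric parameter reduces to showing that at some point $\gamma(t_0)$ one can tilt $-\grad_f$ in enough directions by varying the metric nearby. Choosing $t_0$ in the regular stratum of $V^{H_p}$, i.e.\ where $\stab(\gamma(t_0)) = H_p$, an open dense subset of $V^{H_p}$, any $H_p$-equivariant bump perturbation of $g$ supported near $\gamma(t_0)$ produces tilts covering all of $T_{\gamma(t_0)} V^{H_p}$ transverse to the flow direction, which is exactly the data required by the reduced transversality condition above. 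If $\gamma$ lies entirely in a deeper stratum $V^K$ with $K = \stab(\gamma) \supsetneq H_p$, the same argument applies verbatim with $H_p$ replaced by $K$, since $K \subset H_q$ and the same dimension bookkeeping gives $N_K|_\gamma \subset T_\gamma \A{q}$. The Sard--Smale theorem then yields a Baire-generic subset of metrics for each pair $(p,q)$, and the intersection over the finitely many such pairs is Baire as well.

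The main technical obstacle will be patching across orbifold charts: a metric perturbation constructed in one chart must be compatible under chart embeddings to assemble into a well-defined global orbifold metric. I would handle this by choosing a finite cover $\{(V_i, \Gamma_i, \phi_i)\}$ whose images of closed sub-disks $D_i$ still cover $\tp X$, localizing perturbations to compact subsets of each $V_i$ via $\Gamma_i$-equivariant cutoff functions, and proceeding inductively as in the proof of Theorem~\ref{thm: Morse functions are generic}: at each step, invoke the openness of the Morse--Smale condition on compact subsets (which follows from transversality being an open condition on compacts) to guarantee that the perturbation on $V_{j+1}$ can be chosen small enough to preserve Morse--Smaleness already achieved on $D_1 \cup \dots \cup D_j$.
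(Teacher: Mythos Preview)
Your sketch is correct and captures the mechanism behind the cited result: the paper's own proof consists solely of the sentence ``This theorem follows directly from the proof of Theorem~1.8 in \cite{bao2024morse}, which treats the global quotient orbifold case,'' and your reduction---stability forces $\D{p}$ into the fixed-locus submanifold $V^{H_p}$, where equivariant metric perturbations at a regular point suffice for the parametric Sard--Smale argument---is precisely what makes the global-quotient proof go through chart by chart on a general orbifold.

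One small simplification: the ``deeper stratum'' case $K \supsetneq H_p$ never occurs. The stabilizer is constant along $\gamma$ (your own argument, or Lemma~3.1 of the paper); it contains $H_p$ because $\gamma \subset V^{H_p}$, and it is contained in $\stab(\tilde p) = H_p$ by letting $s \to -\infty$ and using that stabilizers can only grow in the limit. Hence $\stab(\tilde\gamma(s)) = H_p$ for all finite $s$, which is exactly the content of the discussion preceding Lemma~\ref{lemma: quotient is an integer}. You may therefore delete that paragraph without loss.
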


This theorem follows directly from the proof of Theorem~1.8 in \cite{bao2024morse}, which treats the global quotient orbifold case.

For critical points $p, q$ of $f$, we define the moduli space $\M{p}{q} = (\D{p} \cap \A{q})/\R$, where $\R$ is the direction of the flow.
Alternatively, we can describe $\M{p}{q}$ as follows.  
We say a map $\gamma: \R \to \tp X$ is a Morse flow line from $p$ to $q$ if, over each chart and for each local lift $\tilde{\gamma}$ of $\gamma$, we have
\[
\frac{\dd \tilde{\gamma}}{\dd s} = -\grad_f(\tilde{\gamma}),
\]
and $\gamma(-\infty) = p$ and $\gamma(+\infty) = q$.  
Equivalently, we can define $\M{p}{q}$ to be the set of equivalence classes of Morse flow lines from $p$ to $q$, where two flow lines are equivalent if they differ by a translation in the domain $\R$.
We denote by $\overline{\mathcal M}(p,q)$ the compactified moduli space obtained by adding broken Morse flow lines from $p$ to $q$.

\begin{lemma}\label{lemma: stabilizers}
  For any $s, s' \in \R$, let $(V,\Gamma, \phi)$ and $(V',\Gamma', \phi')$ be orbifold charts containing $\gamma(s)$ and $\gamma(s')$, respectively.  
  A lift $\tilde{\gamma}(s) \in V$ canonically determines a lift $\tilde{\gamma}(s') \in V'$, and $\stab(\tilde{\gamma}(s)) \subset \Gamma$ is canonically isomorphic to $\stab(\tilde{\gamma}(s')) \subset \Gamma'$.
\end{lemma}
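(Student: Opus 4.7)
The plan is to transport the lift $\tilde\gamma(s)$ to a lift $\tilde\gamma(s')$ by chart-by-chart lifting of the negative gradient flow along the compact arc $\gamma([s,s'])$, and to propagate the stabilizer identification through common refinements of charts. Without loss of generality assume $s < s'$.

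First I would handle a single chart. Given a chart $(V,\Gamma,\phi)$ containing $\gamma(s)$ and a chosen lift $\tilde\gamma(s)$, the $\Gamma$-equivariant metric $g_V$ produces a $\Gamma$-equivariant smooth vector field $-\grad_{f_V}$ on $V$. Solving the ODE $\frac{d}{ds}\tilde\gamma = -\grad_{f_V}(\tilde\gamma)$ with initial condition $\tilde\gamma(s)$ gives a unique lifted trajectory on the maximal open interval $I \ni s$ for which the solution remains in $V$; this trajectory projects to $\gamma|_I$ by construction of $g$ and the compatibility $f_V = f\circ\phi$. Next I would handle the transition between two overlapping charts containing a common point $\gamma(s_*)$: by the standard orbifold fact that two charts around the same point admit a common refinement, there is a chart $(W,H,\psi)$ with $H = \stab$ of the chosen lift, together with embeddings $(W,H,\psi) \hookrightarrow (V,\Gamma,\phi)$ and $(W,H,\psi) \hookrightarrow (V',\Gamma',\phi')$ whose $\psi$-image is a neighborhood of $\gamma(s_*)$. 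The embedding on the first side uniquely pulls the lift back into $W$, and the embedding on the second side pushes it forward to $V'$, yielding a canonical lift in $V'$.

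For the global transport, I would use compactness of $\gamma([s,s'])$ to choose a finite cover by charts $(V_i,\Gamma_i,\phi_i)$, $i = 0,\dots,N$, with $(V_0,\Gamma_0,\phi_0) = (V,\Gamma,\phi)$ and $(V_N,\Gamma_N,\phi_N) = (V',\Gamma',\phi')$, and a partition $s = s_0 < s_1 < \dots < s_N = s'$ with $\gamma([s_{i-1},s_i])$ contained in a single chart whose image meets the previous one in a neighborhood of $\gamma(s_i)$. Alternating the single-chart ODE lift (first paragraph) with the chart-transition step (common refinements) produces the lift $\tilde\gamma(s') \in V'$. Along the same chain of common refinements, the injective group homomorphisms $H_i \hookrightarrow \Gamma_{i-1}$ and $H_i \hookrightarrow \Gamma_i$ restrict to isomorphisms between the stabilizers of the corresponding lifts, because we may shrink each $W_i$ so that $H_i$ is exactly the stabilizer of the lift in $W_i$; composing yields the asserted canonical isomorphism $\stab(\tilde\gamma(s)) \cong \stab(\tilde\gamma(s'))$.

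The main obstacle is showing that the resulting lift and the stabilizer identification do not depend on the choices of cover, partition, or common refinements. For independence along a fixed chain I would argue that any two choices of common refinement at a point $\gamma(s_*)$ admit a further common refinement inside each chart, and the induced identifications of lifts and stabilizers agree there by ODE uniqueness in the refined chart. For independence across different chains I would use a continuity argument on $[s,s']$: the set of parameters $t$ at which two transports produce the same lift is open (by ODE uniqueness on any chart containing $\gamma(t)$) and closed (by the common-refinement step applied near $\gamma(t)$), and it contains $s$, hence equals $[s,s']$. This also shows the lift is independent of whether one uses the forward or backward direction of the flow, completing the proof.
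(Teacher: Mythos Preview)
Your proposal is correct and follows the same strategy as the paper: lift the trajectory via the gradient ODE inside a chart, pass between charts through common refinements/embeddings, and chain these steps over a finite cover of the compact arc $\gamma([s,s'])$. One step you leave implicit is that \emph{within a single chart} the stabilizer is constant along the lifted trajectory; this follows immediately from the $\Gamma$-equivariance of $-\grad_{f_V}$ together with ODE uniqueness (if $\mu\tilde\gamma(s)=\tilde\gamma(s)$ then $\mu\tilde\gamma$ and $\tilde\gamma$ solve the same initial value problem), and it is exactly the argument the paper gives in its first paragraph. Conversely, you are more careful than the paper about the independence of the resulting lift and isomorphism from the choices of cover, partition, and refinements, which the paper simply asserts by calling the identification ``canonical.''
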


\begin{proof}
Suppose $\gamma(s)$ and $\gamma(s')$ both lie in the same orbifold chart $(V, \Gamma, \phi)$.
Let $\tilde{\gamma}_V$ be a continuous lift of $\gamma$ over $V$, and let $\varphi^t_V$ denote the Morse flow on $V$.  
Then $\dd \varphi^{s'-s}_V$ takes $T_{\tilde{\gamma}(s)}V$ isomorphically onto $T_{\tilde{\gamma}(s')}V$.  
Since the Morse function $f$ is $\Gamma$-equivariant, the map $\dd \varphi^{s'-s}_V$ is also $\Gamma$-equivariant.  
Hence $\stab(\tilde{\gamma}(s)) = \stab(\tilde{\gamma}(s'))$.

Now suppose $\gamma(s)$ and $\gamma(s')$ lie in orbifold charts $(V, \Gamma, \phi)$ and $(V', \Gamma', \phi')$, respectively, and that $(V', \Gamma', \phi')$ embeds into $(V, \Gamma, \phi)$ via an embedding $\psi_{VV'}: V' \to V$ and an injective homomorphism $\theta_{\Gamma\Gamma'}: \Gamma' \to \Gamma$.  
Since $\Gamma$ acts effectively on $V$, the stabilizer $\stab(\tilde{\gamma}_V(s')) \subset \Gamma$ acts effectively on a neighborhood of $\tilde{\gamma}_V(s')$ in $V$.  
This shows that $\theta_{\Gamma\Gamma'}$ maps $\stab(\tilde{\gamma}(s'))$ isomorphically onto $\stab(\tilde{\gamma}(s))$ (See Section 1.1 of \cite{adem2007orbifolds}).

In general, we cover the image of $\gamma$ by finitely many sufficiently small charts, and the statement follows inductively from the above argument.
\end{proof}

Suppose $f$ is stable.
For each $[\gamma] \in \M{p}{q}$, let $(V,\Gamma, \phi)$ be an orbifold chart containing $p$, and let $\tilde{p}$ be a lift of $p$.
Then $\stab(\tilde{p}) \subset \Gamma$ fixes $T_{\tilde{p}}^- V$.  
This implies that for each $s \in \R$, $\stab(\tilde{\gamma}(s))$ is canonically isomorphic to $\stab(\tilde{p})$.  
In other words, $\stab(\tilde{\gamma}(s))$ depends only on $\tilde{p}$ and not on $s$ or $[\gamma]$.  
Taking $s \to \infty$, we obtain that $\stab(\tilde{p})$ is canonically isomorphic to a subgroup of $\stab(\tilde{q})$. This implies the following lemma.

\begin{lemma}\label{lemma: quotient is an integer}
  Under the above assumption, $|\stab(\tilde{p})|$ divides $|\stab(\tilde{q})|$.
\end{lemma}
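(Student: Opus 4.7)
The proof is essentially immediate from the discussion preceding the lemma: the plan is to make explicit the injective group homomorphism $\stab(\tilde p) \hookrightarrow \stab(\tilde q)$ that has already been sketched, and then invoke Lagrange's theorem for finite groups.

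First, I would revisit the local picture at $p$. Since $f$ is stable at $p$, the stabilizer $\stab(\tilde p) \subset \Gamma$ fixes $T^-_{\tilde p} V$ pointwise. Because the descending manifold $\D{p}$ is tangent to $T^-_{\tilde p}V$, the lifted flow line $\tilde \gamma$ lies in $\D{p}$, and so points $\tilde \gamma(s)$ sufficiently close to $\tilde p$ are fixed pointwise by $\stab(\tilde p)$. In particular, $\stab(\tilde p) \subset \stab(\tilde\gamma(s))$ for $s$ near $-\infty$, and combined with the preceding lemma (which identifies $\stab(\tilde\gamma(s))$ canonically with $\stab(\tilde p)$ for all $s \in \R$) this inclusion is actually an equality of subgroups under the canonical identification.

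Second, I would take the limit $s \to +\infty$. Fix an orbifold chart $(V',\Gamma',\phi')$ containing $q$ with lift $\tilde q$, and choose $s$ large enough that the continuous lift $\tilde\gamma(s)$ lies in $V'$ and converges to $\tilde q$. Within the single chart $V'$, the argument of the preceding lemma shows that $\stab(\tilde\gamma(s)) \subset \Gamma'$ is literally a fixed subgroup of $\Gamma'$ independent of $s$. Since $\Gamma'$ acts continuously on $V'$, any element of this subgroup, which fixes a sequence $\tilde\gamma(s_n) \to \tilde q$, must fix $\tilde q$; hence this subgroup is contained in $\stab(\tilde q)$. Composing with the canonical identification from the first step yields an injective homomorphism $\stab(\tilde p) \hookrightarrow \stab(\tilde q)$.

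Finally, applying Lagrange's theorem to this injection of finite groups gives $|\stab(\tilde p)|$ divides $|\stab(\tilde q)|$, as desired. The only step that requires some care is the semicontinuity of stabilizers in the limit $s \to +\infty$; but this is standard for smooth, effective finite group actions, since the stabilizer of a limit point must contain any element that fixes a convergent sequence of nearby points (cf.\ Section~1.1 of \cite{adem2007orbifolds}). No genuine obstacle arises, as all the substantive work was already carried out in the preceding paragraphs.
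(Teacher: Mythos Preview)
Your proposal is correct and follows essentially the same approach as the paper: the paper's proof is entirely contained in the paragraph immediately preceding the lemma, which sketches exactly the injection $\stab(\tilde p)\hookrightarrow\stab(\tilde q)$ you make explicit, and the divisibility then follows by Lagrange. Your write-up simply fills in the details (the semicontinuity argument at $s\to+\infty$ and the explicit appeal to Lagrange) that the paper leaves implicit; one minor quibble is that in your first step the preceding lemma only identifies $\stab(\tilde\gamma(s))$ with $\stab(\tilde\gamma(s'))$ for finite $s,s'$, not directly with $\stab(\tilde p)$, but since only the inclusion $\stab(\tilde p)\subset\stab(\tilde\gamma(s))$ is needed for the lemma this does not affect the argument.
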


To orient moduli spaces of Morse flow lines, we need orientation data: choices of orientations of the descending manifolds for each critical point.

\begin{theorem}\label{prop: moduli space is a manifold}
  Given a stable Morse function $f$ and a metric $g$ such that the pair $(f,g)$ is Morse--Smale,  
  there exist consistent choices of orientation data such that  
  for any critical points $p,q$ of $f$, the moduli space $\M{p}{q}$ is a smooth, oriented manifold of dimension $\ind p - \ind q - 1$.  
  In the case $\ind p - \ind q = 2$, the compactified moduli space $\overline{\mathcal M}(p,q)$ is a one-dimensional manifold whose boundary $\p\overline{\mathcal M}(p,q)$, as an oriented manifold, is diffeomorphic to  
  \[
  \coprod_{\substack{r \in \crit(f)\\ \ind(r) = \ind p - 1}} \M{p}{r} \times \M{r}{q}.
  \]
\end{theorem}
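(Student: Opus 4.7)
The plan is to reduce the entire statement to the classical manifold Morse--Smale theorem by exploiting the stability hypothesis. Stability of a critical point $p$ means that $\stab(\tilde p)$ acts trivially on $T_{\tilde p}^- V$; combined with the canonical isomorphism $\stab(\tilde\gamma(s)) \cong \stab(\tilde p)$ established in the preceding lemma, this forces the stabilizer to be constant along any flow line $\gamma$ emanating from $p$, and to embed as a subgroup of $\stab(\tilde q)$. Consequently, every lift $\tilde\gamma$ of a flow line into a local chart $(V,\Gamma,\phi)$ lies entirely inside the fixed locus $V^{\stab(\tilde p)}$, which is a smooth genuine submanifold of $V$.

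First I would construct the smooth structure on $\M{p}{q}$. In a chart around $p$, the descending set $\D{p}$ lifts to a smooth submanifold of $V^{\stab(\tilde p)}$ of dimension $\ind(p)$: the negative eigendirections $T_{\tilde p}^- V$ are fixed by $\stab(\tilde p)$ and hence tangent to the fixed locus, and the gradient flow preserves the fixed locus by equivariance. The Morse--Smale hypothesis provides transversality of the lifted $\D{p}$ with $\A{q}$ inside $V$, whence their intersection is a smooth manifold of dimension $\ind(p) - \ind(q)$. Quotienting by the free $\R$-translation action yields $\M{p}{q}$ as a smooth manifold of the expected dimension.

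Next I would set up orientation data. For each critical point $p$, choose an orientation of $T_{\tilde p}^- V$; stability guarantees that $\stab(\tilde p)$ preserves this orientation since it acts trivially, so the choice is well defined independently of lift and chart, and compatible under embeddings of orbifold charts. The usual short exact sequence
\[
0 \to T_{[\gamma]} \M{p}{q} \oplus \R \to T_{\tilde p}^- V \to N \to 0
\]
from manifold Morse theory (where $N$ denotes the normal direction coming from $T_{\tilde q}^- V^{\perp}$) induces an orientation on $\M{p}{q}$ exactly as in the classical case. This is precisely where stability resolves challenge \ref{challenge c}: no group element in $\stab(\tilde q)$ can reverse the orientation of the subspace coming from $\stab(\tilde p)$-fixed directions, so the orientation descends unambiguously to the orbifold quotient.

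Finally, for the boundary description when $\ind(p) - \ind(q) = 2$, I would invoke the standard gluing and compactness arguments, which are local near a broken trajectory $(\gamma_1,\gamma_2)$ passing through an intermediate critical point $r$. By the same local reduction, a neighborhood of $(\gamma_1,\gamma_2)$ in the compactified moduli space is diffeomorphic to a half-interval inside $V^{\stab(\tilde r)}$, so the classical gluing theorem applies verbatim, and the product orientation on $\M{p}{r} \times \M{r}{q}$ agrees with the boundary orientation inherited from $\overline{\M{p}{q}}$ up to the usual Morse-theoretic sign. The main obstacle I anticipate is the coherence of the orientation data under different chart embeddings and the verification that the induced boundary orientation matches the product orientation; stability largely collapses this difficulty by making the local orientations automatically equivariant, so the remaining task is purely bookkeeping of signs.
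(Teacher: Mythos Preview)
Your proposal is correct and follows essentially the same route as the paper: the key observation in both is that stability forces every flow line (and every nearby flow line) to have stabilizer equal to $\stab(\tilde p)$, so the local lift of $\M{p}{q}$ sits inside a genuine manifold and the quotient by $\Gamma$ introduces no orbifold singularities; orientations and the boundary description then reduce to the classical manifold argument via the isomorphism $T_{\tilde p}^- V \simeq T_{[\gamma]}\M{p}{q} \oplus \R\langle\partial_s\rangle \oplus T_{\tilde q}^- V'$. Your explicit framing in terms of the fixed locus $V^{\stab(\tilde p)}$ is a helpful way to see this, though the paper states the same point more tersely by simply noting that all nearby lifted trajectories share the stabilizer $\stab(\tilde p)$.
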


\begin{remark}
  Without the stability assumption, in general, transversality only implies that the moduli spaces are orbifolds.
\end{remark}

\begin{example}
  Consider the global quotient orbifold $\X = [S^2/C_2]$ obtained by taking $S^2 \subset \R^3$ modulo $C_2 = \langle \sigma \rangle$, where $\sigma(x, y, z) = (-x, y, z)$.  
  Let $f: \X \to \R$ be projection onto the $z$-component.  
  Let $p = (0,0,1)$ and $q = (0,0,-1)$ be the critical points of $f$.  
  Then with respect to the Euclidean metric, the moduli space $\M{p}{q}$ is diffeomorphic to $[0,1]$, which is not a manifold.
\end{example}

\begin{proof}[Proof of Theorem~\ref{prop: moduli space is a manifold}]
  For each $[\gamma] \in \M{p}{q}$, we can cover the image of $\gamma$ with orbifold charts and lift $\gamma$ to $\tilde{\gamma}$ over each chart.  
  This also lifts nearby elements $[\gamma'] \in \M{p}{q}$ around $[\gamma]$.  
  The Morse--Smale condition implies that the lifted neighborhood of $[\gamma]$ is a smooth manifold.  
  Since each element $[\tilde{\gamma}']$ has the same stabilizer $\stab(\tilde{p})$, the moduli space $\M{p}{q}$ is a manifold.

  The orientation argument is the same as in the manifold case: we use Equation~\eqref{eqn: orientation} to determine the orientation of the moduli spaces.
\end{proof}

\section{Invariant and Coinvariant Chain Complexes}

Let $f$ be a stable Morse function from here on out. We define invariant and coinvariant chain complexes.

\subsection{Orientation}
To orient the moduli spaces of Morse flow lines, for each critical point $p \in \crit(f)$ we choose an orientation $o_{\tilde{p}}$ of the negative eigenspace $T_{\tilde{p}}^-V$, where $(V, \Gamma, \phi)$ is an orbifold chart around $p$, and $\tilde{p} \in V$ is a lift of $p$.  
The lifts of $p$ to $V$ are given by $\mu \tilde{p}$ for $\mu \in \Gamma$.  
We define the orientation $o_{\mu \tilde{p}}$ by
\begin{equation}
  o_{\mu \tilde{p}} = \dd\mu|_{\tilde{p}}\, o_{\tilde{p}}.
\end{equation}
This definition is independent of the choice of $\mu$.  
Indeed, if $\mu \tilde{p} = \mu' \tilde{p}$, then $\dd\mu|_{\tilde{p}}\, o_{\tilde{p}} = \dd\mu'|_{\tilde{p}}\, o_{\tilde{p}}$, since $\mu^{-1}\mu' \in \stab(\tilde{p})$ acts trivially on $T^-_{\tilde{p}}V$ by the stability of $p$.

\begin{remark}
  In the unstable case, $\stab(\tilde{p})$ may not preserve the orientation of $T_{\tilde{p}}^- V$.  
  In \cite{cho2014orbifold}, such critical points are excluded from the chain complex.
\end{remark}

For any $[\gamma] \in \M{p}{q}$, 
choose a continuous lift $\tilde{\gamma}$ of $\gamma$.  
Let $(V, \Gamma, \phi)$ and $(V', \Gamma', \phi')$ be coordinate charts containing $p$ and $q$, respectively.  
We define the orientation $o_{[\gamma]}$ of $T_{[\gamma]} \M{p}{q}$ so that the following isomorphism, given by the linearization of the Morse flow along $\gamma$, is orientation-preserving:
\begin{equation}\label{eqn: orientation}
  T_{\tilde{p}}^- V \simeq T_{[\gamma]} \M{p}{q} \oplus \R\langle \p_s \rangle \oplus T^-_{\tilde{q}}V',
\end{equation}
where $s$ denotes the coordinate of $\R$.  
When $\ind(p) - \ind(q) = 1$, we have $o_{[\gamma]} \in \{1, -1\}$.

\subsection{Coinvariant Chain Complex}
We now define the coinvariant chain complex.  
Let $\coC = \Z\langle \crit(f) \rangle$, graded by the Morse index.  
For each critical point $p\in \crit(f)$, denote the corresponding generator in $\coC$ by $[p]$.
Define the differential $\cop: \coC \to \coC$ by
\[
  \cop[p] = \sum_{\substack{q \in \crit(f) \\ \ind(q) = \ind(p) - 1}} 
  \sum_{[\gamma] \in \M{p}{q}} \op{o}_{[\gamma]}\cdot [q].
\]

\begin{lemma}
  $(\cop)^2 = 0$.
\end{lemma}
\begin{proof}
  This follows from Theorem~\ref{prop: moduli space is a manifold}.
\end{proof}

The coinvariant homology is defined as $\coH(\X) = \ker \cop / \op{im} \cop$.

\begin{theorem}
  $\coH(\X)$ is an invariant of the orbifold $\X$, i.e., it is independent of the choice of stable Morse function and metric.
\end{theorem}
The proof is given in Section~\ref{section: proof of invariance}.

\begin{conjecture}
  $\coH(\X)$ is isomorphic to the homology of $\tp X$ over $\Z$.
\end{conjecture}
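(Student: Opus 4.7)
The plan is to prove the conjecture by constructing, from a stable Morse--Smale pair $(f,g)$, a CW decomposition of the underlying topological space $\tp X$ whose cellular chain complex with orientations from \eqref{eqn: orientation} is canonically isomorphic to $(\coC,\cop)$. This generalizes the classical Morse--Smale CW decomposition of a closed manifold to the orbifold setting, with stability playing the role that ensures no genuine orbifold-cell quotients appear.

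First, I would verify that each descending manifold descends to an embedded open disk in $\tp X$. In a chart $(V,\Gamma,\phi)$ containing $p$ with lift $\tilde p$, stability ensures $\stab(\tilde p)$ acts trivially on $T^-_{\tilde p}V$; since $-\grad_f$ is $\stab(\tilde p)$-equivariant, $\D{\tilde p}\subset V$ is pointwise fixed by $\stab(\tilde p)$, so $\phi$ restricts to a homeomorphism $\D{\tilde p}\to\D{p}\subset\tp X$. A short argument using the Morse--Smale condition shows that for distinct critical points the cells $\D{p}$ are disjoint, and every $-\grad_f$-orbit limits to a critical point, yielding a disjoint decomposition $\tp X=\coprod_{p\in\crit(f)}\D{p}$ of the correct dimensions.

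Second, I would show this decomposition is a CW structure and match the cellular boundary to $\cop$. Using compactification of $\overline{\mathcal M}(p,q)$ and broken-trajectory analysis, $\overline{\D{p}}\setminus\D{p}\subset\bigcup_{\ind(q)<\ind(p)}\D{q}$, and a natural parametrization $\overline{D}^{\ind(p)}\to\overline{\D{p}}$ extends continuously, providing an attaching map. The contribution of $\D{q}$ to the cellular boundary of $\D{p}$ when $\ind(q)=\ind(p)-1$ equals $\sum_{[\gamma]\in\M{p}{q}}\op{o}_{[\gamma]}$ by the same orientation convention as in \eqref{eqn: orientation}. Hence the cellular chain complex of this CW structure coincides with $(\coC,\cop)$, and since cellular homology equals singular homology, $\coH(\X)=H_*(\tp X;\Z)$.

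The main obstacle is the orbifold CW structure, in particular the continuous extension of the attaching maps. In the manifold case this rests on delicate broken-trajectory gluing; for orbifolds the analysis must be carried out equivariantly in charts, tracking compatibility of lifts across chart embeddings as trajectories pass through critical points with nontrivial stabilizer. The stability assumption is decisive here: the stabilizer identification along flow lines established in Section~3 shows that along any broken trajectory $p\to r_1\to\cdots\to r_k\to q$, the stabilizers embed canonically into one another and each acts trivially on the relevant lifted descending directions, so no orbifold quotient is ever introduced and the cells remain topological disks. Extra care is required at cells whose closures meet highly singular strata, where multiple chart patches of varying stabilizer sizes must be glued compatibly. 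Once the CW structure is in place, the identification with $(\coC,\cop)$ proceeds essentially as in the manifold case, and invariance under choice of $(f,g)$ is automatic from the identification with $H_*(\tp X;\Z)$.
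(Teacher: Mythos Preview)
The paper does not prove this statement: it is explicitly recorded as a \emph{conjecture}, with only the remark that it would sharpen Theorem~2.9 of \cite{cho2014orbifold} from $\Q$ to $\Z$. There is therefore no proof in the paper to compare your proposal against.

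That said, your strategy is the natural one and is well aimed. The observation that stability forces $\stab(\tilde p)$ to fix the local descending manifold $\D{\tilde p}$ pointwise (hence $\phi|_{\D{\tilde p}}$ is injective and $\D{p}\subset \tp X$ is an honest open $\ind(p)$--disk) is correct and is exactly the mechanism by which stability removes orbifold quotients from the cells; the flow-line stabilizer lemma in Section~3 lets you propagate this beyond a single chart. Likewise, once a CW structure is in hand, the identification of the cellular boundary with $\sum_{[\gamma]\in\M{p}{q}} o_{[\gamma]}$ is the standard degree argument and should go through verbatim because Theorem~\ref{prop: moduli space is a manifold} already gives $\M{p}{q}$ as a genuine $0$--manifold.

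The genuine gap, which you identify yourself, is the existence of the CW structure, and this is not a formality. Even for closed manifolds the statement that descending manifolds of a Morse--Smale pair form a CW complex is a delicate theorem (Laudenbach's appendix to Bismut--Zhang; Qin's work) requiring control of how the closure $\overline{\D{p}}$ is assembled from broken trajectories and lower cells, together with a continuous extension of the characteristic map over strata of arbitrary codimension. Your sketch only treats index-gap $1$ and $2$; the inductive extension across deeper strata, and the verification that the resulting attaching maps are continuous on the whole boundary sphere, is where the real work lies. In the orbifold setting you must additionally check that the equivariant gluing of broken trajectories in charts descends compatibly to $\tp X$ when the intermediate critical points $r_i$ have strictly larger stabilizers than $p$ (Lemma~\ref{lemma: quotient is an integer} allows $|\stab(r_i)|>|\stab(p)|$). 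Your outline gestures at this but does not supply the argument. Until those steps are filled in, what you have is a credible program for attacking the conjecture rather than a proof.
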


This generalizes Theorem~2.9 of \cite{cho2014orbifold}, which is over $\Q$.

\subsection{Invariant Chain Complex}
We next consider the invariant chain complex.  
Let $\inC = \Z\langle \crit(f) \rangle$, graded by the Morse index.  
For each critical point $p \in \crit(f)$, denote the corresponding generator in $\inC$ by $\bar{p}$.  
Define the differential $\inp: \inC \to \inC$ by
\[
  \inp\bar{p} = 
  \sum_{\substack{q \in \crit(f) \\ \ind(q) = \ind(p) - 1}} 
  \sum_{[\gamma] \in \M{p}{q}} 
  \op{o}_{[\gamma]} \frac{|\stab(q)|}{|\stab(p)|}\, \bar{q},
\]
where $\stab(p)$ and $\stab(q)$ denote the stabilizer groups of $p$ and $q$, respectively. 

\begin{lemma}
  $(\inp)^2 = 0$.
\end{lemma}
\begin{proof}
  Let $p$ and $r$ be critical points of $f$ with $\ind(p) - \ind(r) = 2$. Then we compute the coefficent of $\bar{r}$ in $(\inp)^2\bar{p}$:
  \begin{align*}
    \langle (\inp)^2\bar{p}, \bar{r} \rangle 
    & = \frac{|\stab(r)|}{|\stab(p)|} 
      \sum_{\substack{q \in \crit(f) \\ \ind(q) = \ind(p) - 1}} 
      \sum_{[\gamma]\in \M{p}{q}} 
      \sum_{[\eta] \in \M{q}{r}} 
      o_{[\gamma]} o_{[\eta]} \\
    & = \frac{|\stab(r)|}{|\stab(p)|} \langle (\cop)^2[p], [r] \rangle \\
    & = 0.
  \end{align*}
\end{proof}

We define the invariant homology as $\inH = \ker \inp / \op{im} \inp$.

\begin{theorem}
  $\inH(\X)$ is an invariant of the orbifold $\X$, i.e., it is independent of the choice of stable Morse function and metric.
\end{theorem}
The proof is given in Section~\ref{section: proof of invariance}.

\begin{example} \label{example: sphere}
  Consider the orbifold $\X$ whose underlying topological space is $\tp X = S^2$.  
  It has two orbifold points $p$ and $q$ with stabilizers $\Z_m$ and $\Z_n$, respectively.  
  Suppose we choose a Morse function $f$ with an index-$2$ critical point at $p$ and an index-$0$ critical point at $q$.  
  Then $f$ is unstable at $p$. We stabilize it by introducing an index-$2$ critical point $p''$ and an index-$1$ critical point $p'$ near $p$, as illustrated in Figure~\ref{fig:perturbed}. This corresponds to equivariantly adding $m$ index-$2$ critical points and $m$ index-$1$ critical points in $S^2$ near $p$ as illustrated in Figure~\ref{fig:perturbed}.

  The orbifold Euler characteristic is $\chi(\X) = 1/m + 1/n$.

  For the coinvariant chain complex, we have 
  $\cop [p''] = 0$ since there are two flow lines from $p''$ to $p'$ with opposite orientations as shown in Figure~\ref{fig:perturbed};  
  $\cop [p'] = [p] - [q]$; and $\cop [p] = \cop[q] =0$.  
  This yields the homology 
  \[\coH_2 (\X) = \Z, \quad  \quad  \coH_1(\X) = 0,  \quad  \quad \coH_0(\X)= \Z,\] 
  which coincides with the homology of $\tp X$.

  For the invariant chain complex, we have 
  $\inp \bar{p}'' = 0$, 
  $\inp \bar{p}' = m \bar{p} - n \bar{p}'$, 
  and $\inp \bar{p} = \inp \bar{q}= 0 $.  
  This gives 
  \[\inH_2 (\X) = \Z, \quad  \quad \inH_1(\X) = 0,  \quad  \quad \inH_0(\X)= \Z \oplus \Z_\ell,\] 
  where $\ell = \op{gcd}(m, n)$.  
  Note that $\X$ is a global quotient if and only if $\ell = 1$.
\end{example}

\begin{figure}[h]
\centering

\begin{subfigure}[t]{0.35\textwidth}
  \centering
  \begin{tikzpicture}[scale=1.3, line join=round]
    \draw[line width=0.8pt]
      (0,1.5) .. controls (-0.9,0.4) and (-0.9,-0.4) .. (0,-1.5);
    \draw[line width=0.8pt]
      (0,1.5) .. controls (0.9,0.4) and (0.9,-0.4) .. (0,-1.5);

    \draw[line width=0.8pt]
      (-0.65,0.2) .. controls (-0.65,-0.1) and (0.65,-0.1) .. (0.65,0.2);
    \draw[dashed,line width=0.6pt]
      (-0.65,0.2) .. controls (-0.65,0.3) and (0.65,0.3) .. (0.65,0.2);

    \node[above] at (0,1.5) {$p$};
    \node[below] at (0,-1.5) {$q$};
  \end{tikzpicture}
  \caption{$S^2$ with orbifold points $p$ and $q$, which are also the critical points of the height function.}
  \label{fig: sphere with two critical points}
\end{subfigure}
\hspace{1cm}
\begin{subfigure}[t]{0.55\textwidth}
  \centering
  \begin{tikzpicture}[scale=1.3, line join=round, >=Stealth]
    \node[anchor=south west, inner sep=0] (img) at (0,0)
        {\includegraphics[width=6cm]{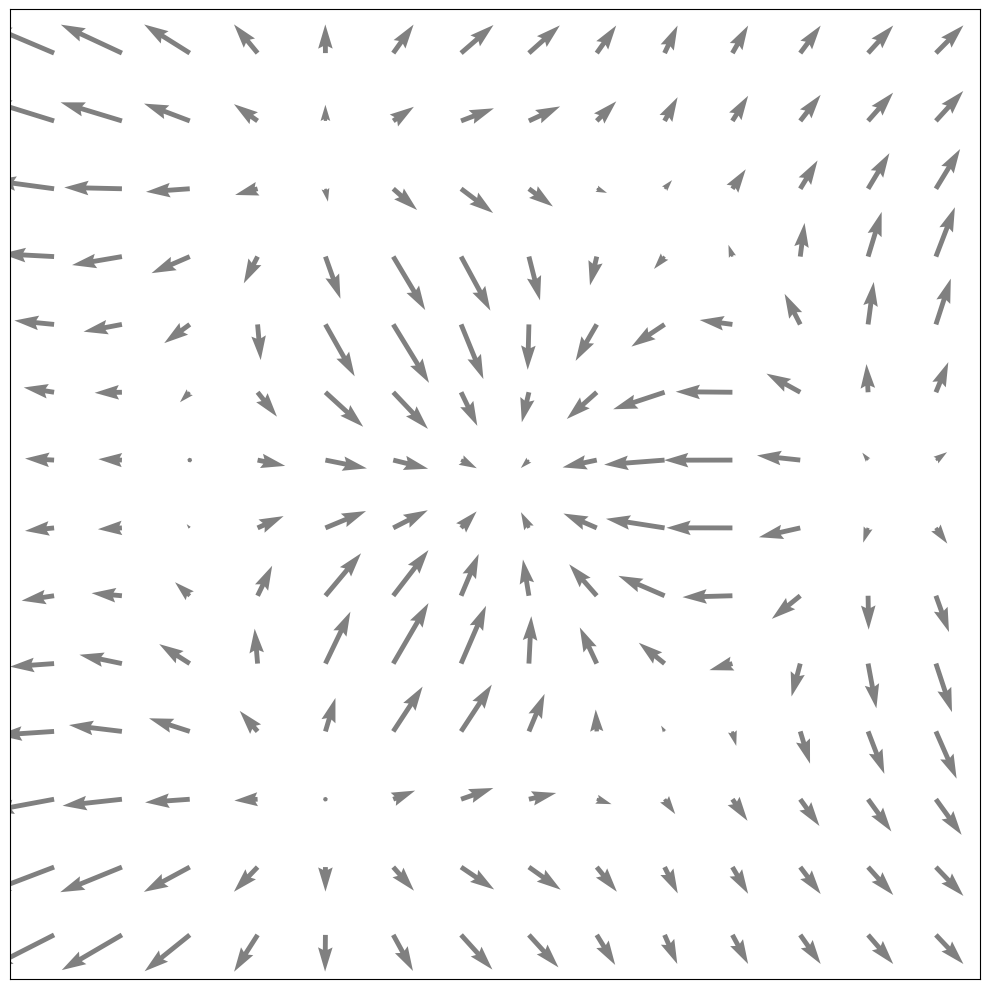}};
    \begin{scope}[x={(img.south east)}, y={(img.north west)}]

      \coordinate (Pcoord)  at (0.50,0.50);
      \coordinate (P1coord) at (0.90,0.50);
      \coordinate (P2coord) at (0.33,0.84);
      \coordinate (P3coord) at (0.33,0.19);
      \coordinate (P4coord) at (0.70,0.78);
      \coordinate (P5coord) at (0.17,0.5);
      \coordinate (P6coord) at (0.68,0.27);

      \foreach \p in {Pcoord,P1coord,P2coord,P3coord,P4coord,P5coord,P6coord}
        \fill (\p) circle (0.02);

      \node[above right] at (Pcoord)  {$\color{purple} p$};
      \node[above right] at (P1coord) {$\color{purple} p''$};
      \node[above right] at (P2coord) {$\color{purple} p''$};
      \node[above right] at (P3coord) {$\color{purple} p''$};
      \node[above right] at (P4coord) {$\color{purple} p'$};
      \node[above right] at (P5coord) {$\color{purple} p'$};
      \node[above right] at (P6coord) {$\color{purple} p'$};

      \draw[red, very thick, postaction={decorate, decoration={markings, mark=at position 0.5 with {\arrow{>}}}}]
        (P3coord) -- (P5coord);
      \draw[red, very thick, postaction={decorate, decoration={markings, mark=at position 0.5 with {\arrow{>}}}}]
        (P3coord) -- (P6coord);
      \draw[blue, very thick, postaction={decorate, decoration={markings, mark=at position 0.5 with {\arrow{>}}}}]
        (P6coord) -- (Pcoord);

    \end{scope}
  \end{tikzpicture}
  \caption{Negative gradient vector field of the perturbed Morse function in the orbifold chart centered at $p$ (case $m = 3$).}
  \label{fig:local perturbed}
\end{subfigure}

\caption{Stabilization of the critical point $p$.}
\label{fig:sphere_perturbed}
\end{figure}

In the above example, when $m = n = 2$, the orbifold $\X$ is a global quotient. Now we compute the invariant and coinvariant homologies with a different Morse function.
\begin{example}
  Consider the global quotient orbifold $\X = [S^2 / \langle \sigma \rangle]$, where $\sigma: S^2 \to S^2$ is the rotation about the $z$-axis by an angle of $\pi$.  
  Embed $S^2$ as a bean-shaped surface, as illustrated in Figure~\ref{fig: bean under rotation}, and let the Morse function be the height function.  
  In this case, there are three critical points $p$, $q$, and $r$. The group action reverses the orientation of the descending manifold $\mathcal D_q$.  
  In \cite{cho2014orbifold}, the critical point $q$ is omitted from the chain complex.  
  Since $q$ is unstable, we stabilize it by adding an index-$1$ critical point $q'$ near $q$, making the index of $q$ equal to $0$ with respect to the new morse function.  

  For the coinvariant chain complex, we have $\cop [p] = 0$ since the two flow lines from $p$ to $q'$ cancel; $\cop [q'] = [q] - [r]$; and $\cop [r] = \cop [q] = 0$.  
  This yields the homology \[\coH_2 = \Z, \quad  \quad  \coH_1 = 0,  \quad  \quad  \coH_0 = \Z.\]  

  For the invariant chain complex, we have $\inp \bar{p} = 0$, $\inp \bar{q}' = 2\bar{q} - 2\bar{r}$, and $\inp \bar{r} = \inp \bar{q} = 0$.  
  This yields the homology \[\inH_2 = \Z, \quad  \quad  \inH_1 = 0, \quad  \quad  \inH_0 = \Z \oplus \Z_2.\]
\end{example}

\begin{figure}[h]
\centering

\begin{subfigure}[t]{0.3\textwidth}
  \centering
  \begin{tikzpicture}[scale=0.8, line join=round, >=Stealth]
    \node[anchor=south west, inner sep=0] (img) at (0,0)
        {\includegraphics[width=5cm]{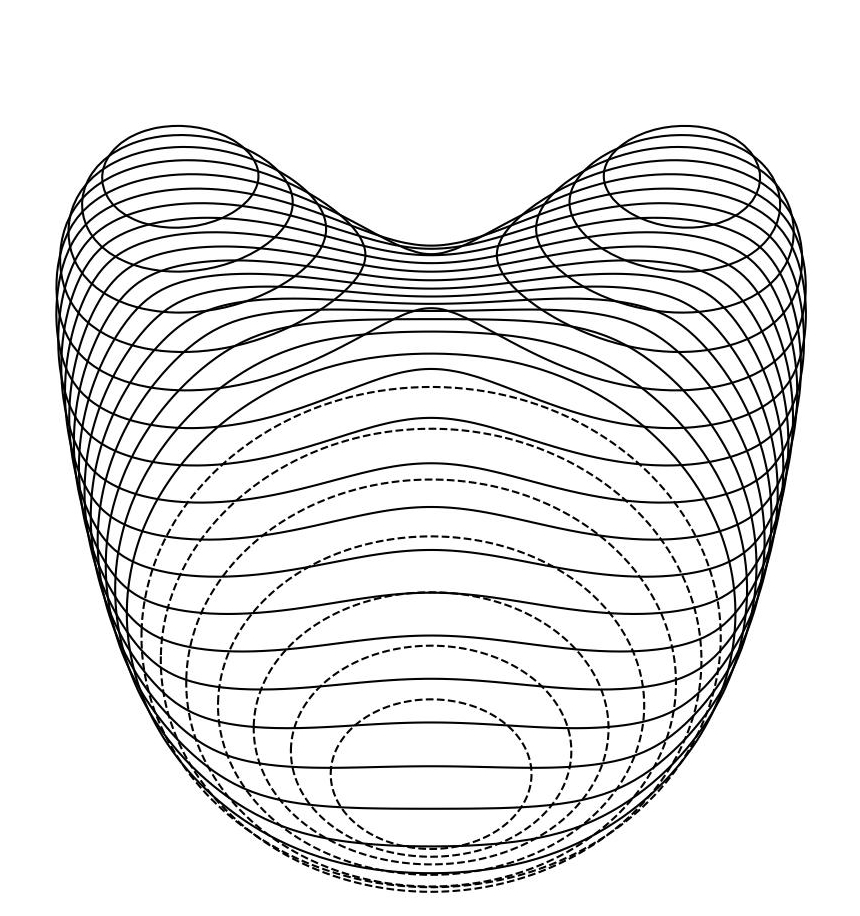}};
    \begin{scope}[x={(img.south east)}, y={(img.north west)}]
    
      \draw[->] 
    (0.46, 1) arc[start angle=150, end angle=390, radius=0.05];

      \fill[black] (0.2,0.83) circle (0.01);
      \node[above] at (0.2,0.83) {$\color{black} p$};

      \fill[black] (0.8,0.83) circle (0.01);
      \node[above] at (0.8,0.83) {$\color{black} p$};

      \fill[blue] (0.5,0.7) circle (0.01);
      \draw[dotted] (0.5,0) -- (0.5,1);


    \fill[black] (0.5,0.15) circle (0.01);
    \node[above] at (0.48,0.15) {$\color{black} r$};

    \end{scope}
  \end{tikzpicture}
  \caption{The center blue dot is the critical point $q$ before stabilization.}
  \label{fig:perturbed}
\end{subfigure}
\hspace{2cm}
\begin{subfigure}[t]{0.3\textwidth}
  \centering
  \begin{tikzpicture}[scale=0.8, line join=round, >=Stealth]
    \node[anchor=south west, inner sep=0] (img) at (0,0)
        {\includegraphics[width=5.3cm]{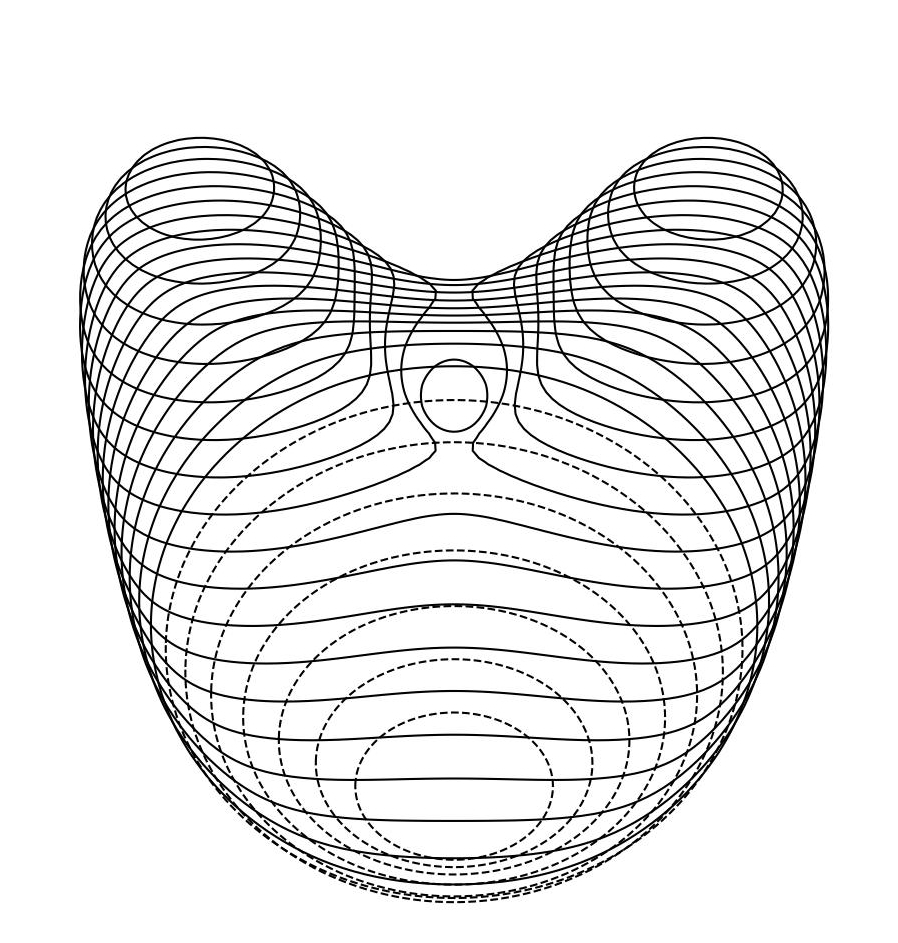}};
    \begin{scope}[x={(img.south east)}, y={(img.north west)}]

      \draw[->] 
    (0.46, 1) arc[start angle=150, end angle=390, radius=0.05];
      
          \fill[black] (0.2,0.83) circle (0.01);
      \node[above] at (0.2,0.83) {$\color{black} p$};

      \fill[black] (0.8,0.83) circle (0.01);
      \node[above] at (0.8,0.83) {$\color{black} p$};

      \fill[red] (0.5,0.68) circle (0.01);

      \fill[red] (0.5,0.51) circle (0.01);
      
      \fill[blue] (0.5,0.57) circle (0.01);

      \fill[black] (0.5,0.15) circle (0.01);
    \node[above] at (0.48,0.15) {$\color{black} r$};

      \draw[dotted] (0.5,0) -- (0.5,1);
    \end{scope}
  \end{tikzpicture}
  \caption{The red dots (identified by rotation) are new critical point $q'$ after stabilization.}
  \label{fig:perturbed}
\end{subfigure}

\caption{The bean shaped $S^2$ quotient by $\pi$ rotation.}
\label{fig: bean under rotation}
\end{figure}

\section{Proof of Invariance}\label{section: proof of invariance}
To show that the homologies $\coH(\X)$ and $\inH(\X)$ are independent of the choice of stable Morse function and metric, we follow the standard argument in Morse theory using continuation maps. The proof of Theorem~\ref{thm: morse smale is generic fixing a stable f} in \cite{bao2024morse} requires a perturbation of the metric around the critical points.
To make the proof fit into the standard framework, we give an alternative proof of Theorem~\ref{thm: morse smale is generic fixing a stable f}. 
In terms of transversality, there is no loss of generality in working in the global quotient case. 

Let $M$ be a closed manifold and $G$ be a finite group acting on $M$ effectively and smoothly.
Let $f$ be a $G$-invariant stable Morse function on $M$. For any critical points $p$ and $q$ of $f$, define $$\mathcal B = \mathcal B(p,q) = \{ \gamma \in W^{1,2}(\R, M) : \lim_{s \to -\infty} \gamma(s) = p, \lim_{s \to +\infty} \gamma(s) = q \}.$$
Let $\mathcal J = \mathcal J^k$ be the space of $G$-invariant Riemannian metrics on $M$ of class $C^k$ for some $k \geq 2$.
Consider the bundle $\mathcal E \to \mathcal J \times \mathcal B$ whose fiber over $(g, \gamma)$ is $L^2(\R, \gamma^* TM)$.
Denote by $\mathbb L: \mathcal J \times \mathcal B \to \mathcal E$ the section defined by $$\mathbb L(g, \gamma) = \frac{\dd \gamma}{\dd s} + \op{grad}_{f,g}(\gamma).$$

\begin{proposition}
  The section $\mathbb L$ is transverse to the zero section of $\mathcal E$.
\end{proposition}
This is the standard transversality result when $G$ is trivial. For example see \cite[Proposition 5.8]{hutchings2002lecture}.
\begin{proof}
  For any $(g, \gamma) \in \mathbb L^{-1}(0)$, denote by $\mathbb D_{(g, \gamma)}: T_g \mathcal J \times T_\gamma \mathcal B \to \mathcal E_{(g, \gamma)}$ the linearization of $\mathbb L$ at $(g, \gamma)$. 
  In particular, for any $h \in T_g \mathcal J$ and $\xi \in T_\gamma \mathcal B$, we have
  \[
  \mathbb D_{(g, \gamma)}(h, \xi) = \nabla_s \xi + H\xi + Fh,
  \] 
  where $\nabla$ is the Levi-Civita connection of $g$, and $H \xi = \nabla_\xi \op{grad}_{f,g}(\gamma)$, and $Fh$ is the variation of $\op{grad}_{f,g}(\gamma)$ with respect to $h$.
  Let $\varsigma \in \mathcal E_{(g, \gamma)}$ be an element in the $L^2$-orthogonal complement of the image of $\mathbb D_{(g, \gamma)}$, i.e.,
  \[
  \langle \mathbb D_{(g, \gamma)}(h, \xi), \varsigma \rangle = 0 \quad \text{for all } (h, \xi) \in T_g \mathcal J \times T_\gamma \mathcal B.
  \]
  We want to show that $\varsigma \equiv 0$.
  Then $\varsigma \in \mathcal E_{(g, \gamma)}$ satisfies the following equations:
  \begin{align*}
    \langle \nabla_s \xi + H\xi, \varsigma \rangle & = 0 \quad \text{for all } \xi \in T_\gamma \mathcal B, \\
    \langle Fh, \varsigma \rangle & = 0 \quad \text{for all } h \in T_g \mathcal J.
  \end{align*}
 Since $H$ is self-adjoint, the first equation implies 
  \[
  -\nabla_s \varsigma + H \varsigma = 0.
  \]
Since $\varsigma$ satisfies the above ordinary differential equation, it is nowhere vanishing unless $\varsigma \equiv 0$. For any $s \in \R$, let $x = \gamma(s)$. Then by the proof of Lemma~\ref{lemma: stabilizers}, we have $\stab(x) = \stab(p) =: K$. Then $\varsigma(s) = \varsigma(s)^K + \varsigma(s)^\perp$, where $\varsigma(s)^K$ is the $K$-invariant part of $\varsigma(s)$ and $\varsigma(s)^\perp$ is the complement of $\varsigma(s)^K$ in $T_x M$ defined as the kernel of the averaging operator $$\frac{1}{|K|} \sum_{\varphi \in K} \dd \varphi|_x: T_x M \to T_x M.$$ 
Note that the averaging operator acts as the identity on the $K$-invariant part.
Since $H$ is $G$-equivariant, it is also $K$-equivariant. Hence, we have 
\[
-\nabla_s \varsigma^K + H \varsigma^K = 0, \quad -\nabla_s \varsigma^\perp + H \varsigma^\perp = 0.
\]
Suppose $\varsigma(s)^K \neq 0$ for some $s$.
By \cite[Proof of Proposition 5.8]{hutchings2002lecture}, we can choose a variation $h'$ of $g$ at $x$ such that $Fh'= \varsigma^K(s)$. By averaging $h'$ over $K$, we can further assume that $h'$ is $K$-invariant. Then we take a variation $h$ of $g$ such that $h$ coincides with $h'$ at $x$ and is supported in a small neighborhood of $x$. This gives 
\[
\langle Fh, \varsigma^K \rangle > 0,
\]
which is a contradiction to the second equation above. Hence, we have $\varsigma^K \equiv 0$. 

Denote by $H_q: T_q M \to T_q M$ the Hessian of $f$ at $q$ raised to a $(1,1)$-tensor using the metric $g$.
Denote by $$\dots \leq \lambda_{-2} \leq \lambda_{-1} < 0 < \lambda_1 \leq \lambda_2 \leq \cdots$$ the eigenvalues of $H_q$ ($0$ is not an eigenvalue by the non-degeneracy of the critical point $q$), and by $v_i$ the eigenvectors of $H_q$ with eigenvalues $\lambda_i$.

Suppose $\varsigma^\perp$ is not identically zero. For $s$ sufficiently close to $+\infty$, $\varsigma^\perp$ has a leading term $a_i(s) e^{\lambda_i s} v_i$ for some $i < 0$ where $\lim_{s \to \infty} a_i(s)$ exists and is nonzero. 
Since $q$ is stable, we have $v_i \in T_q M$ is invariant under the action of $\stab(q)$. By the proof of Lemma~\ref{lemma: stabilizers}, we have $\stab(q) \subseteq \stab(p) = K$.
Hence $v_i$ is also invariant under the action of $K$. This contradicts to the fact that $\varsigma(s)^\perp$ is in the kernel of the averaging operator $\frac{1}{|K|} \sum_{\varphi \in K} \dd \varphi|_x: T_x M \to T_x M$.
\end{proof}

The rest of the proof of Theorem~\ref{thm: morse smale is generic fixing a stable f} is standard and can be found in \cite{salamonMorse, hutchings2002lecture}. For completeness, we outline the argument here. Let $\mathcal X = \mathbb L^{-1}(0)$ be the zero set of $\mathbb L$. Then $\mathcal X$ is a smooth Banach manifold since $\mathbb L$ is transverse to the zero section. Consider the projection $\pi: \mathcal X \to \mathcal J$. By the Sard--Smale theorem, the set of regular values of $\pi$ is residual in $\mathcal J$. For any regular value $g$ of $\pi$, one can show that the operator $\mathbb L(g, \cdot): \mathcal B \to \mathcal E$ is transverse to the zero section, which implies that $\mathbb L(g, \cdot)^{-1}(0)$—the space of parametrized flow lines from $p$ to $q$—is a smooth manifold of dimension $\ind(p) - \ind(q)$. This completes the proof of Theorem~\ref{thm: morse smale is generic fixing a stable f}.

To show that the homology $\coH(\X)$ is independent of the choice of stable Morse function and metric, we construct a continuation map $$\Phi_{10}: \coC(f_0, g_0) \to \coC(f_1, g_1)$$ for any two stable Morse--Smale pairs $(f_0, g_0)$ and $(f_1, g_1)$ by choosing a generic path $\{(f_t, g_t)\}_t$. We also construct a continuation map $$\Phi_{01}: \coC(f_1, g_1) \to \coC(f_0, g_0)$$ using the reversed path $\{(f_{1-t}, g_{1-t})\}_t$. Then we show that $\Phi_{10} \circ \Phi_{01}$ is chain homotopic to the identity on $\coC(f_1, g_1)$ and $\Phi_{01} \circ \Phi_{10}$ is chain homotopic to the identity on $\coC(f_0, g_0)$ by choosing generic homotopies of paths. The details can be found in \cite[Section 4]{hutchings2002lecture}. This completes the proof of the invariance of $\coH(\X)$. 

The invariance of $\inH(\X)$ follows in the same way.

\bibliographystyle{alpha}
\bibliography{mybib}

\end{document}